\documentclass[11pt]{amsart}

\usepackage{amssymb}
\usepackage{amsmath}
\usepackage[all,cmtip]{xy}
\usepackage{enumitem}

\usepackage{amsfonts}
\usepackage{mathrsfs}
\usepackage{latexsym}
\usepackage{graphicx}
\usepackage{amscd,amssymb,amsmath,amsbsy,amsthm}
\usepackage[colorlinks,plainpages,urlcolor=blue]{hyperref}
\allowdisplaybreaks

\usepackage{tikz}
\usetikzlibrary{arrows,calc}
\tikzset{
>=stealth',
help lines/.style={dashed, thick},
axis/.style={<->},
important line/.style={thick},
connection/.style={thick, dotted},
}

\newtheorem{theorem}{Theorem}[section]
\newtheorem{lem}[theorem]{Lemma}
\newtheorem{cor}[theorem]{Corollary}

\theoremstyle{definition}
\newtheorem{dfn}[theorem]{Definition}
\newtheorem{example}[theorem]{Example}
\newtheorem{rem}[theorem]{Remark}

\newtheorem{assumption}[theorem]{Assumption}

\newtheorem{thm}{Theorem}

\newcommand{\bbZ}{\mathbb{Z}}

\newcommand{\calC}{\mathcal{C}}

\newcommand{\calI}{\mathcal{I}}

\newcommand{\frakI}{\mathfrak{I}}

\newcommand{\al}{\alpha}
\newcommand{\be}{\beta}
\newcommand{\de}{\delta}
\newcommand{\la}{\lambda}
\newcommand{\La}{\Lambda}

\newcommand{\hh}{\mathtt{h}} 

\newcommand{\unit}{\mathbf{1}} 

\newcommand{\pt}{{\operatorname{pt}}}

\DeclareMathOperator{\Hom}{Hom}   

\newcommand{\bfH}{\mathbf{H}}
\newcommand{\bfD}{\mathbf{D}}

\newcommand{\ep}{\epsilon}

\DeclareMathOperator{\trace}{\trace}

\newcommand{\mul}{{\operatorname{m}}}
\newcommand{\hy}{{\operatorname{h}}}

\DeclareMathOperator{\TL}{TL}

\title[Billey-type formula]
{Billey-type formula for KL-Schubert classes in hyperbolic cohomology}

\author[C.~Lenart]{Cristian~Lenart}
\address{State University of New York at Albany, 1400 Washington Avenue, Albany, NY 12222}
\email{clenart@albany.edu}

\author[K.~Zehr]{Krista~Zehr}
\email{kzehr@albany.edu}

\author[C.~Zhong]{Changlong~Zhong}
\email{czhong@albany.edu}

\begin{document}

\begin{abstract}This paper studies the KL-Schubert classes defined by Kazhdan-Lusztig bases in $K$-theory and hyperbolic cohomology of flag varieties. We first establish Poincar\'e dualities of these classes. We then focus on Grassmannians, and establish the Billey-type formula for KL-Schubert classes in hyperbolic cohomology. 
\end{abstract}
\maketitle

\section{Introduction}
Generalized Schubert calculus concerns classes determined by Schubert 
varieties in general cohomology theories $h$ beyond singular cohomology 
and $K$-theory. Since Schubert varieties are singular, these classes are 
usually defined by using the Bott-Samelson resolutions, and the 
definition usually depends on the choice of reduced sequences. 
There are two exceptions. The first is to consider the equivariant 
elliptic cohomology and define the classes depending on an additional 
parameter called the dynamical parameter. These classes were first 
defined by Rim\'anyi-Weber by using some early work of Borisov-Libgober 
on elliptic genus and it is related to Aganagic-Okounkov's elliptic 
stable envelop. See \cite{AO21, RW20, LZZ23} for more details. The other 
collection of classes are called Kazhdan-Lusztig (KL) Schubert classes 
defined for certain generalized cohomology theory, called hyperbolic 
cohomology. This cohomology theory is an example of the algebraic 
oriented cohomology of Levine and Morel. A geometric definition of 
hyperbolic cohomology is unknown (unless one uses the base-change from the algebraic cobordism). However, since there is a one-to-one 
correspondence between algebraic oriented cohomology theories and 
one-dimensional commutative formal group laws, the hyperbolic 
cohomology $\hh$  is the one defined by the so called hyperbolic formal 
group law
\[
F(x,y)=\frac{x+y-xy}{1-\mu^{-2} xy}, \quad \mu=t+t^{-1}\in 
R:=\bbZ[t,t^{-1}, \frac{1}{t+t^{-1}}]. 
\]

We now focus on $\hh_T(G/B)$. By using the twisted group algebra theory 
of Kostant-Kumar, for each formal group law,  one can define the formal 
affine Demazure algebra generated by the divided difference operators. 
For $\hh$ this is denoted by $\bfD_\hh$, and this algebra acts on 
$\hh_T(G/B)$.  The Bott-Samelson classes are generated by this 
action. It is shown in \cite{LZZ20} that there is an embedding of the 
Hecke algebra $\bfH$ into $\bfD_\hh$, so the KL-Schubert classes are 
defined by the action of the Kazhdan-Lusztig basis on $\hh_T(G/B)$.

These classes have many interesting properties. For example, if the 
Schubert variety is smooth, it is proved in \cite{LSZZ23} that the KL 
Schubert classes coincide with the Schubert classes. On the other hand, 
for type $A$ flag varieties, if there is a small resolution of the 
Schubert variety, then the KL-Schubert class coincide with the class 
defined by the small resolution. This shows that hyperbolic cohomology 
is closely related to singularities of Schubert varieties. 

An important formula in (equivariant) cohomology Schubert calculus is the so-called Billey formula~\cite{AJS94, B99}, which expresses combinatorially the localizations of equivariant Schubert classes at torus fixed points. This formula was extended to $K$-theory (for structure sheaves of Schubert varieties and their duals) in~\cite{G02, W04}. A Billey-type formula in hyperbolic cohomology was derived in~\cite{LZ17}. The present paper is concerned with a Billey-type formula for KL-Schubert classes. One important property that the Kazhdan-Lusztig basis $\{\gamma^\pm_w\}$ lacks is that of being multiplicatively generated; that is, the basis elements are not generated by the elements $\gamma^\pm_{s_i}$ indexed by simple reflections $s_i$. This multiplicative property is necessary to derive a Billey-type formula, so we encounter a major difficulty. Note here that there are two versions of Kazhdan-Lusztig bases, which are denoted by $\gamma^\pm_w$. See Section 2 below for their definitions. 

However, for type $A$ Grassmannians $G/P_J$, an early result of Fan-Green 
shows that by taking certain quotient of $\bfH$ that sends the basis $\gamma^\pm_w$ to zero for $w\in W$ that are not $iji$-avoiding,  the images of the remaining $\gamma^\pm_w$ will be 
generated by the images of $\gamma^\pm_{s_i}$ multiplicatively (see Lemma 
\ref{lem:FG}). Indeed this quotient defines the Temperley-Lieb algebra. This enables us to prove our first main result, Theorem 
\ref{thm:p}. 

It is well-known that  Schubert varieties and opposite Schubert varieties define Poincar\'e dual classes. Moreover, only classes determined by opposite Schubert 
varieties admit Billey-type formula. This includes the  opposite Schubert class and the SSM class of opposite Schubert cells in cohomology, and  the fundamental class, ideal sheaf class or the SMC  of opposite Schubert varieties (cells)   in $K$-theory.  These phenomena still hold for 
KL-Schubert classes. Roughly speaking, the two versions of Kazhdan-Lusztig bases correspond to Schubert varieties and opposite Schubert varieties. This suggests that it is essential to consider both versions of Kazhdan-Lusztig bases, and indeed they define Poincar\'e dual classes. For $K$-theory, this duality is proved in   Theorem  \ref{thm:Kdual} and for hyperbolic cohomology, it is proved in  Theorem \ref{thm:hyper} and  Theorem \ref{thm:hyperGP}. 

Combining these theorems, we obtain the Billey-type formula for KL-Schbuert class for the opposite Schubert variety (Theorem \ref{thm:Billey}) for the Grassmannian $G/P_J$. 

Note that there is another multiplicativity result of the 
Kazhdan-Lusztig basis, also in the type $A$ Grassmannian by Kirillov-Lascoux \cite{KL00}. This one does not behave well with the Weyl 
group, or more precisely, is not compatible with the process of writing 
$w$ as a reduced sequence. It is only compatible with small resolutions, see 
\cite{LSZZ23} for more details.

This paper is organized as follows: In Section 2 we recall the 
definition of the Kazhdan-Lusztig basis and establish the duality in $K$-theory
between the classes determined by the two versions of the 
Kazhdan-Lusztig bases  in Theorem \ref{thm:Kdual}. In Section 3 we focus on the Kazhdan-Lusztig 
basis for the type $A$ Grassmannian and prove Theorem \ref{thm:p} that indicates
 the multiplicativity of the Kazhdan-Lusztig basis. In 
Section 4 we turn to hyperbolic cohomology, establish the duality in 
this case in Theorem \ref{thm:hyper} and Theorem \ref{thm:hyperGP}. We then derive the Billey-type
formula in  Theorem \ref{thm:Billey} in Section 5, and provide some example of calculation. 

\noindent{\bf Acknowledgement} The authors would like to thank Rui Xiong for very helpful conversations, especially for pointing out to us the paper \cite{FG97}. C.L. was partially supported by the NSF grants DMS-1855592 and DMS-2401755. C.Z. was partially supported by Simons Foundation Travel Support for Mathematicians TSM-00013828. 
\section{Hecke algebra and $K$-theory KL-Schubert classes}
In this section, we recall the notion of formal group algebra and Hecke algebra, and establish a duality result of $K$-theory KL-Schubert classes.

\subsection{Kazhdan-Lusztig bases}
Let $G$ be a reductive group with Borel subgroup $B$ and  maximal split torus $T$ of rank $n$, with $\Lambda$ the group of characters of $T$. Let $\Pi=\{\al_1,...,\al_n\}$ be the set of simple roots, $\Sigma$ be the set of roots, and $W$ be the Weyl group with the longest element $w_0\in W$.  For each $J\subset\Pi$, we denote by $W_J$ the  subgroup of $W$ corresponding to $J$, and $\Sigma_J\subset \Sigma$ be the corresponding subset of roots, and $P_J$ be the corresponding parabolic subgroup.   Let $w_J\in W_J$ be the longest element. Let $W^J$ (resp. ${}^JW$) be the set of minimal length representative of left cosets (resp. right cosets). 

 Let $R=\bbZ[t, t^{-1}]$ and let $S^\mul$ be the group algebra $R[\Lambda]$.  Let $Q^\mul=Frac(S^\mul)$ and we define the corresponding twisted group algebras $Q^\mul_W=Q^\mul\rtimes R[W]$ with basis $\delta^\mul_w, w\in W$.  The product is given as follows:
\[
a\de_w^\mul\cdot  b\de_v^\mul=aw(b)\de^\mul_{w,v}, \quad a,b\in Q^\mul, w,v\in W.
\]
One can then define the Demazure-Lusztig operator  for each simple root $\al$
\[
\tau_\al=\frac{t^{-1}-t}{1-e^{-\al}}+\frac{t-t^{-1}e^{-\al}}{1-e^{-\al}}\delta_\al^\mul\in Q_W^\mul. 
\]
It satisfies the braid relations and the quadratic relation $\tau^2_\al=(t^{-1}-t)\tau_\al+1$. $\tau_{\al_i}, i=1,...,n$ generate the Hecke algebra $\bfH$. Denote \[\tau_i=\tau_{\al_i}, ~x_\al=1-e^{-\al}, ~x_i:=x_{\al_i}, ~t_w=t^{\ell(w)}, ~\ep_w=(-1)^{\ell(w)}. \]
For each $J\subset\Pi$, denote
\[
x_{J}^\mul =\prod_{\al\in \Sigma_J, \al<0}x_\al, \quad x^\mul _\Pi=\prod_{\al<0}x_\al=\prod_{\al>0}(1-e^{\al}), \quad x^\mul _{\Pi/J}=\frac{x^\mul _\Pi}{x^\mul _J}. \] 
Note that $w_0x_\Pi^\mul=\prod_{\al>0}x_\al$.

Let $P_{w,v}(t)\in \bbZ[t]$ be the Kazhdan-Lusztig polynomials, then there are two versions of Kazhdan-Lusztig bases:
\[
\gamma_v^+=\sum_{w\le v}t_vt_w^{-1}P_{w,v}(t^{-2})\tau_w, \quad \gamma_v^-=\sum_{w\le v}\ep_w\ep_vt_v^{-1}t_wP_{w,v}(t^2)\tau_w.
\]
For example,  for simple reflection $s_\al$, 
\[\gamma^+_\al:=\gamma^+_{s_\al}=\tau_\al+t=(\de_\al^\mul +1)\frac{t^{-1}-te^{-\al}}{1-e^{-\al}},\quad \gamma^-_\al:=\gamma^-_{s_\al}=\tau_\al-t^{-1}=\frac{t-t^{-1}e^{-\al}}{1-e^{-\al}}(\de_\al^\mul -1). \]
Moreover, it is known that $P_{w,w_J}=1$ for all $w\le w_J$, so 
\[
\gamma^+_{w_J}=\sum_{w\le w_J}t_{w_J}t_w^{-1}\tau_w, \quad \gamma^-_{w_J}=\sum_{w\le w_J}\ep_w\ep_{w_J}t_{w_J}^{-1}t_w\tau_w. 
\]
Denote $\gamma^\pm_j=\gamma^\pm_{\al_j}$. 
It is easy to show that 
$\gamma_j^-\gamma_j^+=0. $ This will play an key role in Theorem \ref{thm:p}. 

Note that  there is an anti-involution defined by 
\[
i:\bfH\to \bfH, \quad  \tau_w\mapsto \tau_{w^{-1}}, \quad t\mapsto t,
\]
and it maps $\gamma^\pm_w$ to $\gamma^\pm_{w^{-1}}$. Together with \cite[Corollary 5.18.(ii)]{LZZ20}, one can easily prove the following lemma. 
\begin{lem}\label{lem:KLfactor}
\begin{enumerate}
\item If $ws_\al<w$, then $\gamma_w^\pm=z^\pm \gamma_\al^\pm$ for some $z^\pm\in Q_W^\mul $. Similarly, if $s_\al w<w$, then  $\gamma_w^\pm=\gamma_\al^\pm z^\pm$ for some $z^\pm\in Q_W^\mul $. 
\item If $w=uw_J$ with $u\in W^J$, then $\gamma^\pm _w=z^\pm \gamma^\pm_{w_J}$ for some $z^\pm\in Q_W^\mul $. Similarly, if $w=w_Ju$ with $u\in {}^JW$, then $\gamma^\pm_w=\gamma^\pm_{w_J}z^\pm$ for some $z^\pm\in Q_W^\mul .$
\end{enumerate}
\end{lem}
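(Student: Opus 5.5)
The plan is to prove the right-hand versions first, using the cited \cite[Corollary 5.18.(ii)]{LZZ20}, and then obtain the left-hand versions by applying the anti-involution $i$. I read that corollary as saying that if $ws_\al<w$, then $\gamma^\pm_w$ is right divisible by $\gamma^\pm_\al$ in $Q^\mul_W$, i.e. $\gamma^\pm_w=z^\pm\gamma^\pm_\al$ for some $z^\pm\in Q^\mul_W$. If it is instead phrased via the operator description (e.g. $\gamma^+_w\cdot(\de^\mul_\al-1)=0$, equivalently $\gamma^\pm_w$ is an eigenvector of right multiplication by $\tau_\al$ with eigenvalue $-t^{\mp1}$), I would recover the factorization directly. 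Over $Q^\mul$, write $\gamma^+_w=\sum_v a_v\de^\mul_v$. The condition $\gamma^+_w\de^\mul_\al=\gamma^+_w$ forces $a_{vs_\al}=a_v$, so
\[
\gamma^+_w=\sum_{v<vs_\al}a_v\de^\mul_v(1+\de^\mul_\al).
\]
Since $\gamma^+_\al=(\de^\mul_\al+1)\frac{t^{-1}-te^{-\al}}{1-e^{-\al}}$ and the rational function on the right is invertible in $Q^\mul$, we get $1+\de^\mul_\al=\gamma^+_\al\cdot c^{-1}$ with $c=\frac{t^{-1}-te^{-\al}}{1-e^{-\al}}$. This gives $\gamma^+_w=z^+\gamma^+_\al$ with
\[
z^+=\sum_{v<vs_\al}a_v\de^\mul_v\,c^{-1}.
\]
The $\gamma^-$ case is analogous, now using $\gamma^-_\al=c'(\de^\mul_\al-1)$ with $c'=\frac{t-t^{-1}e^{-\al}}{1-e^{-\al}}$ and antisymmetric coefficients $a_{vs_\al}=-a_v$. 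Here one uses $v(c')$ to move the scalar past $\de^\mul_v$, since $a_v\de^\mul_v(\de^\mul_\al-1)=a_v\,v(c'^{-1})\de^\mul_v\gamma^-_\al$.

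For the left-hand statements in (1), apply $i$. It is an anti-involution of $\bfH$ and sends $\gamma^\pm_w$ to $\gamma^\pm_{w^{-1}}$. If $s_\al w<w$, then $w^{-1}s_\al<w^{-1}$, so $\gamma^\pm_{w^{-1}}=y\gamma^\pm_\al$, and formally $\gamma^\pm_w=i(\gamma^\pm_\al)\,i(y)=\gamma^\pm_\al\,i(y)$. The catch is that $i$ is defined only on $\bfH$, while $y$ lives in $Q^\mul_W$.

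I expect this to be the main obstacle, and I would handle it in one of two ways. The first is to extend $i$ to an anti-automorphism of $Q^\mul_W$. A natural candidate is $a\de^\mul_w\mapsto \de^\mul_{w^{-1}}a$ composed with a suitable twist making it agree with $\tau_\al\mapsto\tau_\al$; one must check that $\tau_\al$ is fixed, possibly after conjugating by $x^\mul_\Pi$ or using $e^\la\mapsto e^{-\la}$. The second is simply to rerun the argument above with left multiplication. The left eigenvector property $\tau_\al\gamma^\pm_w=-t^{\mp1}\gamma^\pm_w$ holds by applying $i$ inside $\bfH$, where no extension is needed. Writing $\gamma^\pm_w=\sum_v\de^\mul_v b_v$ then gives the symmetry in $v\mapsto s_\al v$ and a left factor $\gamma^\pm_\al$ exactly as before. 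I would take this second route.

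For (2), write $w=uw_J$ with $u\in W^J$. Every $s\in J$ satisfies $ws<w$, so by (1), together with its operator form, $\gamma^\pm_w$ is killed by right multiplication by $\de^\mul_{s}\mp1$ for all $s\in J$. The coefficients $a_v$ are therefore (anti)invariant under $W_J$, giving
\[
\gamma^\pm_w=\sum_{v\in W^J}a_v\de^\mul_v\sum_{y\in W_J}(\pm1)^{\ell(y)}\de^\mul_y .
\]
The remaining step is to show that $\sum_{y\in W_J}(\pm1)^{\ell(y)}\de^\mul_y$ equals $\gamma^\pm_{w_J}$ times an invertible element of $Q^\mul$. To do this, note that $\gamma^\pm_{w_J}$ itself satisfies the same invariance, because $P_{w,w_J}=1$ and it lies in the same eigenspace. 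Hence $\gamma^\pm_{w_J}=\sum_{v\in W^J}b_v\de^\mul_v\sum_y(\pm1)^{\ell(y)}\de^\mul_y$, and since it lives in $Q^\mul_{W_J}$ only $v=e$ contributes, with $b_e\neq0$. Its coefficient $b_e$ can be computed (or shown nonzero) by comparing leading terms, as in the classical formula involving $x^\mul_J$. Inverting $b_e$ finishes (2) on the right, and the left version follows by the same left-multiplication variant.
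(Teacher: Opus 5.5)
Your top-level plan (right-hand statements from \cite[Corollary 5.18.(ii)]{LZZ20}, left-hand ones via the anti-involution $i$) is exactly the paper's one-line argument. Your worry that $i$ is only defined on $\bfH$ while $z^\pm\in Q^\mul_W$ is legitimate. The problem is the direct argument you fall back on, which you actually use for the left-hand statements and for all of (2). It equates the $\tau_\al$-eigen-condition with (anti)invariance of $\gamma^\pm_w$ itself under $\de^\mul_\al$ on the same side, and this is false in half the cases. Put $c=\frac{t^{-1}-te^{-\al}}{1-e^{-\al}}$ and $c'=s_\al(c)=\frac{t-t^{-1}e^{-\al}}{1-e^{-\al}}$. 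Then $\gamma^+_\al=\tau_\al+t=(\de^\mul_\al+1)c$ carries its scalar on the right, while $\gamma^-_\al=\tau_\al-t^{-1}=c'(\de^\mul_\al-1)$ carries it on the left. For $ws_\al<w$ one has $\gamma^+_w\tau_\al=t^{-1}\gamma^+_w$ (eigenvalue $t^{-1}$, not $-t^{-1}$), i.e.\ $\gamma^+_w\gamma^-_\al=0$, i.e.\ $\gamma^+_wc'(\de^\mul_\al-1)=0$. So it is $\gamma^+_wc'$, not $\gamma^+_w$, that is right $\de^\mul_\al$-invariant; already $\gamma^+_\al(\de^\mul_\al-1)=(c'-c)(1-\de^\mul_\al)\neq0$. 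Your next step fails as well: from $1+\de^\mul_\al=\gamma^+_\al c^{-1}$ you cannot move $c^{-1}$ to the left of $\gamma^+_\al$. In fact $1+\de^\mul_\al\notin Q^\mul_W\gamma^+_\al$, since any $y\gamma^+_\al=y(1+\de^\mul_\al)c$ has its $\de^\mul_e$- and $\de^\mul_\al$-coefficients in ratio $c:c'$. Symmetrically, for $s_\al w<w$ it is $c\gamma^-_w$, not $\gamma^-_w$, that is left anti-invariant. In (2), neither $\gamma^+_w$ nor $\gamma^+_{w_J}=(\sum_{y\in W_J}\de^\mul_y)\cdot(\text{scalar})$ is right $W_J$-invariant. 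So as written you only prove the right $\gamma^-$ and left $\gamma^+$ cases.

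The simplest repair bypasses coefficients altogether. Your eigen-property is equivalent to the Kazhdan--Lusztig rule $\gamma^\pm_w\gamma^\pm_\al=\pm\mu\gamma^\pm_w$ for $ws_\al<w$, where $\mu=t+t^{-1}$, so $z^\pm=\pm\mu^{-1}\gamma^\pm_w$ works. For (2), $w=uw_J$ gives $\gamma^+_w\tau_y=t^{-\ell(y)}\gamma^+_w$ and $\gamma^-_w\tau_y=(-t)^{\ell(y)}\gamma^-_w$ for $y\in W_J$. Using $P_{y,w_J}=1$ this yields
\[
\gamma^+_w\gamma^+_{w_J}=t^{\ell(w_J)}\sum_{y\in W_J}t^{-2\ell(y)}\,\gamma^+_w,\qquad
\gamma^-_w\gamma^-_{w_J}=\ep_{w_J}t^{-\ell(w_J)}\sum_{y\in W_J}t^{2\ell(y)}\,\gamma^-_w,
\]
with nonzero scalars in $\operatorname{Frac}(R)\subset Q^\mul$. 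These $z^\pm$ lie in $\operatorname{Frac}(R)\otimes_R\bfH$, and $i$ extends linearly to it because $\operatorname{Frac}(R)$ is central in $Q^\mul_W$. The paper's $i$-argument then gives the left-hand versions, and your extension worry disappears. If you prefer to keep your coefficient argument, the fix is that $cc'$ is $s_\al$-invariant, so $(1+\de^\mul_\al)c'^{-1}=(cc')^{-1}\gamma^+_\al$ and $c^{-1}(\de^\mul_\al-1)=\gamma^-_\al(cc')^{-1}$.
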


\subsection{Complimentary property}
It is well known that  the coefficient of $\de^\mul _{w_0}$ in the product  
\[
\tau_w\tau_{u^{-1}w_0}^{-1}
\] is equal to 0 unless $u=w$ (see \cite[Lemma 4.6]{SZZ20}). We call it the complimentary property. Moreover, if $u=w$,  the coefficient of $\de^\mul _{w_0}$ in $\tau_w\tau_{u^{-1}w_0}^{-1}$ is equal to \[a_{w_0}:=\prod_{\al>0}\frac{t-t^{-1}e^{-\al}}{1-e^{-\al}}=\frac{\prod_{\al>0}(t-t^{-1}e^{-\al})}{w_0(x_\Pi^\mul)}.\]
This can be seen if one writes $\tau_w$ as a linear combination of $\delta^\mul_v, v\le w$, and just compute the leading coefficient. It turns out that this property is not unique to the DL operators. 

\begin{lem}\label{lem:KLcom}The two operators $\gamma^+_w,  \gamma^-_v$ are complimentary. That is, the coefficient of $\de_{w_0}^\mul $ in 
\[
\gamma_w^+\gamma^-_{u^{-1}w_0}
\] is equal to 0 unless $w=u$. 
\end{lem}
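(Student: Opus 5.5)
The plan is to reduce the statement to the classical Kazhdan--Lusztig inversion formula, with the complimentary property of the $\tau$'s used as a black box. Put $v=u^{-1}w_0$. The idea is to expand $\gamma^+_w$ in the basis $\{\tau_a\}$ and $\gamma^-_v$ in the basis $\{\tau^{-1}_b\}$. Then the complimentary property (\cite[Lemma 4.6]{SZZ20}) picks out a single pairing of indices, and the resulting sum is a known orthogonality relation between KL polynomials.

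\emph{Step 1: expand $\gamma^-_v$ in the inverse basis.} Let $\overline{\phantom{x}}$ be the bar involution of $\bfH$, with $t\mapsto t^{-1}$ and $\tau_b\mapsto \tau_{b^{-1}}^{-1}$. The basis $\gamma^-_v$ is bar-invariant; with the normalization $\gamma^-_s=\tau_s-t^{-1}$ one checks $\overline{\tau_s-t^{-1}}=\tau_s^{-1}-t=\tau_s-t^{-1}$. Hence
\[
\gamma^-_v=\sum_{b\le v}\ep_b\ep_v t_v t_b^{-1}P_{b,v}(t^{-2})\,\tau^{-1}_{b^{-1}} .
\]
I will double-check the exact signs and powers of $t$ on the generator case.

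\emph{Step 2: apply the complimentary property.} We have $\gamma^+_w=\sum_{a\le w}t_wt_a^{-1}P_{a,w}(t^{-2})\tau_a$. By the complimentary property, the coefficient of $\de^\mul_{w_0}$ in $\tau_a\tau^{-1}_{b^{-1}}$ is $0$ unless $b^{-1}=a^{-1}w_0$, i.e.\ $b=w_0a$, in which case it equals $a_{w_0}$. Therefore the coefficient of $\de^\mul_{w_0}$ in $\gamma^+_w\gamma^-_v$ equals
\[
a_{w_0}\sum_{a}t_wt_a^{-1}P_{a,w}(t^{-2})\,\ep_{w_0a}\ep_v\,t_vt_{w_0a}^{-1}P_{w_0a,\,u^{-1}w_0}(t^{-2}).
\]
The constraints $a\le w$ and $w_0a\le u^{-1}w_0$ together give $u\le a\le w$ (using $x\le y\iff w_0x\ge w_0y$ and $x\le y\iff x^{-1}\le y^{-1}$). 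In particular the coefficient vanishes unless $u\le w$.

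\emph{Step 3: collapse the $t$-powers and invoke inversion.} Using $\ell(w_0a)=\ell(w_0)-\ell(a)$ and $\ell(v)=\ell(w_0)-\ell(u)$, the monomials $t_wt_a^{-1}t_vt_{w_0a}^{-1}$ combine into $t^{\ell(w)-\ell(u)}$, which is independent of $a$. The signs likewise become $\ep_a\ep_u$ up to a global sign. Next rewrite $P_{w_0a,u^{-1}w_0}=P_{w_0 a, w_0 u'}$ by passing to inverses and conjugating by $w_0$, which preserves KL polynomials; here $u'=w_0u^{-1}w_0$, or $u$ after adjusting the index convention. The sum then becomes, up to a unit,
\[
\sum_{u\le a\le w}\ep_a\ep_u\,P_{a,w}(q)\,P_{w_0w,\,w_0a}(q),\qquad q=t^{-2}.
\]
This is exactly the Kazhdan--Lusztig inversion formula, which equals $\de_{u,w}$. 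So the coefficient vanishes for $u\ne w$, and for $u=w$ it equals $a_{w_0}$, consistent with the $\tau$ case.

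The main obstacle is bookkeeping rather than ideas. I need to match the index conventions in Step 3 so that the sum is literally the inversion formula and not a variant with $u$ and $u^{-1}$ interchanged. If the indices do not line up directly, the fallback is to apply the anti-involution $i$ (which sends $\gamma^\pm_w\mapsto\gamma^\pm_{w^{-1}}$) together with $w_0$-conjugation to realign them. I also need to confirm the bar-invariance normalization of $\gamma^-$ used in Step 1.
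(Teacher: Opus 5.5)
Your route is the paper's route: use bar-invariance to expand $\gamma^-$ in inverse $\tau$'s, use the complimentary property to collapse the double sum, and finish with Kazhdan--Lusztig inversion. Step 1 is correct. The gap is in Step 2, and the index mismatch you postpone to Step 3 is caused by it, not by a labelling convention.

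In your expansion, $\tau^{-1}_{b^{-1}}$ is the honest inverse $(\tau_{b^{-1}})^{-1}=\tau^{-1}_{i_1}\cdots\tau^{-1}_{i_k}$, where $b=s_{i_1}\cdots s_{i_k}$ is a reduced word. Its leading term is a multiple of $\de^\mul_b$, and the leading term of $\tau_a$ is a multiple of $\de^\mul_a$. So the surviving pairs have $ab=w_0$, i.e.\ $b=a^{-1}w_0$, not $b=w_0a$.

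The complimentary property stated before the lemma holds when $\tau^{-1}_y$ means the product $\tau^{-1}_{j_1}\cdots\tau^{-1}_{j_m}$ along a reduced word of $y$, i.e.\ $(\tau_{y^{-1}})^{-1}$. It fails if $\tau^{-1}_y$ means $(\tau_y)^{-1}$: in type $A_2$, $\tau_{s_1}(\tau_{s_1w_0})^{-1}=\tau_{s_1}\tau_{s_1}^{-1}\tau_{s_2}^{-1}=\tau_{s_2}^{-1}$ has no $\de^\mul_{w_0}$-term. You used one reading in Step 1 and the other in Step 2.

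As a result, your sum really equals $a_{w_0}\de_{w,u'}$ with $u'=w_0u^{-1}w_0$. Its range is $u'\le a\le w$, not $u\le a\le w$, since $w_0a\le u^{-1}w_0\iff a\ge u'$. This conclusion is false. Take $w=u=s_1$ in type $A_2$, so $u^{-1}w_0=s_2s_1$. Your rule gives $b\in\{w_0,s_1s_2\}$, neither of which is $\le s_2s_1$, so it predicts $0$. But the $\de^\mul_{w_0}$-coefficient of $\gamma^+_{s_1}\gamma^-_{s_2s_1}$ comes from $\de^\mul_{s_1}\cdot\de^\mul_{s_2s_1}$ and equals $a_{w_0}\neq 0$. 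Your fallback (the anti-involution $i$ or $w_0$-conjugation) cannot repair this, because the value of the sum is wrong, not its labelling.

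With the correct pairing $b=a^{-1}w_0$, no realignment is needed:
\begin{enumerate}
\item $b\le u^{-1}w_0$ holds iff $u\le a$ (right-multiply by $w_0$).
\item $P_{a^{-1}w_0,\,u^{-1}w_0}=P_{w_0a,\,w_0u}$, using only $P_{x,y}=P_{x^{-1},y^{-1}}$.
\item The $t$-monomial collapses to $t^{\ell(w)-\ell(u)}$ and the sign to $\ep_a\ep_u$.
\end{enumerate}
The coefficient is then $t^{\ell(w)-\ell(u)}a_{w_0}\sum_{u\le a\le w}\ep_a\ep_uP_{a,w}(t^{-2})P_{w_0a,w_0u}(t^{-2})=\de_{w,u}a_{w_0}$. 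This follows from the form \eqref{eq:Pdual} of the inversion formula, exactly as in the paper.

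Separately, your final displayed sum pairs $P_{a,w}$ with $P_{w_0w,w_0a}$. That mixes the two versions of the inversion formula and is not an identity: for $u=e$ and $w=s_2s_1s_3s_2$ in type $A_3$ it equals $-q(1+q)$.
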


\begin{proof}
It is well-known that  
\begin{equation}\label{eq:Pdual} P_{u,v}=P_{u^{-1}, v^{-1}}, ~ \text{and }~
\sum_v\ep_u\ep_v P_{v,w}P_{w_0v, w_0u}=\de_{w,u}. 
\end{equation}
So we have 
\begin{align*} \gamma^-_w&=\sum_u \ep_{u}\ep_wt_{u}t_w^{-1}P_{u,w}(t^2)\tau_u\overset{\sharp_1}=\sum_{u}\ep_u\ep_w t_{w}t_u^{-1}P_{u,w}(t^{-2})\tau_{u^{-1}}^{-1}=\sum_u \ep_{w}\ep_ut_{w^{-1}}t_{u^{-1}}^{-1}P_{u^{-1}, w^{-1}}(t^{-2})\tau^{-1}_u. 
\end{align*}
Here $\sharp_1$ follows since $\gamma_w^-$ is invariant under Lusztig's involution. So 
\begin{align*}
\gamma^-_{w^{-1}w_0}&=\sum_u\ep_{w_0w}\ep_{u^{-1}}t_{w_0w}t_{u^{-1}}^{-1}P_{u^{-1}, w_0w}(t^{-2})\tau^{-1}_u.
\end{align*}
We use $z|_{w_0}$ to denote the coefficient of $\de^\mul _{w_0}$ in $z\in Q_W^\mul$. As mentioned above, we have 
\[
(\tau_w\tau_{u^{-1}w_0}^{-1})|_{w_0}=\de_{w,u}a_{w_0}. 
\]
Therefore, 
\begin{align*}
(\gamma_w^+\gamma^-_{u^{-1}w_0})|_{w_0}&=\left(\sum_{v_1}t_{w}t_{v_1}P_{v_1,w}(t^{-2})\tau_{v_1}\sum_{v_2}\ep_{w_0u}\ep_{v_2^{-1}}t_{w_0w}t_{v_2^{-1}}^{-1}P_{v_2^{-1}, w_0u}(t^{-2})\tau^{-1}_{v_2}\right)|_{w_0}\\
&=\left(\sum_{v}t_{w}t_v^{-1}P_{v,w}(t^{-2})\ep_{w_0u}\ep_{w_0v}t_{w_0w}t_{w_0v}^{-1}P_{w_0v, w_0u}(t^{-2})\tau_{v}\tau^{-1}_{v^{-1}w_0}\right)|_{w_0}\\
&=\sum_{v}P_{v,w}(t^{-2})\ep_{u}\ep_v P_{w_0v, w_0u}(t^{-2})a_{w_0}\\
&\overset{\eqref{eq:Pdual}}=\de_{w,u}a_{w_0}. 
  \end{align*}
\end{proof}

\subsection{$K$-theory KL Schubert classes}
To define $K$-theory classes, we  consider  the dual
\[
(Q_W^\mul)^*=\Hom_{Q^\mul}(Q_W^\mul, Q^\mul)=\Hom(W, Q^\mul),
\]
which is a $Q^\mul$-module with basis $f_w$ dual to the basis $\de_w^\mul$, and it has a product structure 
\begin{equation}\label{eq:Kprod}
f_wf_v=\de_{w,v}f_w. 
\end{equation}
The multiplicative unit is 
\[
\unit:=\sum_{v\in W}f_v. 
\]
There are two actions of $Q_W^\mul$ on $(Q_W^\mul)^*$, defined by 
\begin{equation}\label{eq:action}
a\de^\mul _w\bullet bf_v=bvw^{-1}(a)f_{vw^{-1}}, \quad a\de_w^\mul \odot bf_v=aw(b)f_{wv}, \quad a,b\in Q^\mul. 
\end{equation}
The two actions commute. The $\bullet$-action is $Q^\mul$-linear, but the $\odot$-action is not. The $\bullet$-action was considered in \cite{KK86, KK90, CZZ19, LZZ20, SZZ20}, and the $\odot$-action was considered in \cite{B97, K03, T09, MNS22}. In \cite{MNS22} they are called the right action and the left action, respectively. When we say a $W_J$-invariant element $f\in ((Q^\mul _w)^*)^{W_J}$, we mean
\[
\de_w^\mul \bullet f=f, \quad \forall w\in W_J. 
\]
It is easy to see that $((Q_W^\mul )^*)^{W_J}\cong Q^{\mul }\otimes_{S^{\mul }} K_T(G/P_J)$. Moreover,  $((Q_W^\mul)^*)^W$ is the algebraic model for $Q^\mul\otimes_{S^\mul}K_T(\pt)$, which is free $Q^\mul$-module with basis $\unit$. 

We fix a set $A$ of representatives of left cosets of $W/W_J$, and define 
\[
Y^\mul _J=\sum_{w\in W_J}\de_w^\mul \frac{1}{x^\mul _J}, \quad Y_{\Pi/J}^\mul =\sum_{w\in A}\de_{w}^\mul \frac{1}{x^\mul _{\Pi/J}}\in Q_W^\mul. 
\]
$Y_J^\mul \bullet\_:(Q^\mul_W)^*\to ((Q_W^\mul)^*)^{W_J}$ is the algebraic model for the pushforward $K_T(G/B)\to K_T( G/P)$, and $Y^\mul _{\Pi/J}\bullet\_$ is the one for the pushforward $K_T(G/P)\to K_T(\pt)$. Note that when applying $Y^\mul _{\Pi/J}\_\bullet$ on $((Q_W^\mul )^*)^{W_J}\cong Q^\mul\otimes K_T(G/P)$, the definition of $Y_{\Pi/J}^\mul $ will not depend on the choice of the left coset representatives. Moreover, it follows from \cite[Lemma 5.7]{CZZ19} that 
\begin{equation}\label{eq:comp}
Y_{\Pi/J}^\mul Y_J^\mul =Y_\Pi^\mul  . 
\end{equation}
There is the projection formula from \cite[Lemma 5.8]{CZZ19}, that is, 
\begin{equation}\label{eq:projection}
Y^\mul _J\bullet (fg)=fY^\mul _J(g), \quad g\in (Q^\mul _W)^*, f\in ((Q_W^\mul )^*)^{W_J}. 
\end{equation}

Denote 
\[
\pt_e=x_\Pi ^\mul \pt_e, ~\pt_w =\de_w^\mul\bullet\pt_e=wx_\Pi^\mul f_w.
\]
Here $\pt^\mul_w$ corresponds to the class in $K_T(G/B)$ determined the point $wB/B$. Recall the anti-involution from \cite[\S~13]{CZZ19}
\[
\iota:Q^\mul _W\to Q^\mul _W, ~p\de^\mul _w\mapsto \de^{\mul }_{w^{-1}}p\frac{w(x_\Pi^\mul )}{x^\mul _\Pi}, \quad p\in Q^\mul . 
\]
From \cite[\S~3]{LZZ20} one has 
\begin{equation}\label{eq:inv}
z\bullet \pt_e=\iota(z)\odot\pt_e, \quad Y^\mul _\Pi((z\bullet f)\cdot g)=Y^\mul _\Pi(f\cdot (\iota(z)\bullet g)), ~z\in Q_W^\mul, f,g\in (Q_W^\mul )^*. 
\end{equation}
Lastly, since $\iota(Y^\mul_J)=Y^\mul_J$, so  we have
\begin{equation}\label{eq:YJcom}
Y^\mul_J\odot\pt_e=Y^\mul_J\bullet \pt_e. 
\end{equation}

\begin{rem}All the formal properties concerning the product in $(Q^\mul_W)^*$, the two actions $\bullet, \odot$, and the involution $\iota$ hold if one replaces the multiplicative formal group law (e.g., $K$-theory) by any other formal group laws. See \cite{LZZ20} for more details. We will be using these properties later for hyperbolic cohomology $\hy$. 
\end{rem}

The following is our first main result on duality of $K$-theory KL Schubert classes. 
\begin{thm}\label{thm:Kdual}
\begin{enumerate}
\item We have the following duality in $(Q_W^\mul )^*\cong Q^\mul \otimes _{S^\mul} K_T(G/B)$:
\[
Y^\mul _\Pi\bullet \left((\gamma_w^-\odot\pt_e)\cdot( \gamma^+_{v^{-1}w_0}\bullet \pt_{w_0})\right)=\de_{w,v}\prod_{\al>0}(t-t^{-1}e^{-\al}) \unit. 
\]
\item The following two classes are dual in $((Q_W^\mul )^*)^{W_J}\cong Q^\mul \otimes _{S^\mul} K_T(G/P_J)$:
\[
Y^\mul _{\Pi/J}\bullet\left (Y_J^\mul \bullet (\gamma_w^-\odot\pt_e)\cdot (\gamma^+_{u^{-1}w_0}\bullet \pt_{w_0})\right)=\de_{w,u}\prod_{\al>0}(t-t^{-1}e^{-\al})\unit,\quad w,u\in W^J.
\]
\end{enumerate}
\end{thm}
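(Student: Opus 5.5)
For part (1), the plan is to move everything onto $\pt_e$ and reduce to a complementarity statement like Lemma \ref{lem:KLcom}. By \eqref{eq:inv}, $\gamma^-_w\odot\pt_e=\iota^{-1}(\gamma^-_w)\bullet\pt_e$. Applying the adjunction in \eqref{eq:inv} to $z=\iota^{-1}(\gamma_w^-)$ then gives
\[
Y^\mul_\Pi\bullet\big((\gamma_w^-\odot\pt_e)\cdot(\gamma^+_{v^{-1}w_0}\bullet\pt_{w_0})\big)=Y^\mul_\Pi\bullet\big(\pt_e\cdot(\gamma^-_w\gamma^+_{v^{-1}w_0}\de^\mul_{w_0}\bullet\pt_e)\big),
\]
where I have used $\pt_{w_0}=\de^\mul_{w_0}\bullet\pt_e$. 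By \eqref{eq:Kprod}, the product $\pt_e\cdot h$ only sees the $f_e$-coefficient of $h$.

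Next I use the formula \eqref{eq:action} for the $\bullet$-action. Write $z=\gamma^-_w\gamma^+_{v^{-1}w_0}=\sum_u a_u\de^\mul_u$. Then the $f_e$-coefficient of $z\de^\mul_{w_0}\bullet\pt_e$ comes only from the term $u=w_0$, and it equals $a_{w_0}$ times an explicit twist of $x^\mul_\Pi$. Finally, $Y^\mul_\Pi\bullet(cf_e)=\sum_{y}y(c/x^\mul_\Pi)f_y$. So the whole computation reduces to showing
\[
(\gamma^-_w\gamma^+_{v^{-1}w_0})|_{w_0}=\de_{w,v}\cdot(\text{the constant }a_{w_0}\text{ up to the appropriate twist}),
\]
and then checking that the resulting coefficient is $W$-invariant and equals $\prod_{\al>0}(t-t^{-1}e^{-\al})$. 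This last check is a direct computation with $a_{w_0}=\prod_{\al>0}(t-t^{-1}e^{-\al})/w_0(x^\mul_\Pi)$.

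The main obstacle is that Lemma \ref{lem:KLcom} treats $\gamma^+\gamma^-$, while here the order is $\gamma^-\gamma^+$, and the roles of $\pm$ and of $v\leftrightarrow v^{-1}w_0$ must be matched. I would first prove the mirror statement, namely that the coefficient $(\gamma^-_w\gamma^+_{u^{-1}w_0})|_{w_0}$ vanishes unless $w=u$. The proof would repeat the argument of Lemma \ref{lem:KLcom}: expand $\gamma^+_{u^{-1}w_0}$ in the basis $\tau^{-1}_{y}$ using bar-invariance, and apply the complementarity $(\tau_y\tau^{-1}_{y'^{-1}w_0})|_{w_0}=\de_{y,y'}a_{w_0}$. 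The inversion formula \eqref{eq:Pdual} then applies with the signs interchanged. Alternatively, one could try to transport Lemma \ref{lem:KLcom} through the anti-involution $i$ (which fixes $\gamma^\pm_w$ up to $w\mapsto w^{-1}$) combined with conjugation by $\de_{w_0}$. In either route, the delicate point is bookkeeping the twist $w_0(\cdot)$ on the leading coefficient.

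For part (2), I first show that $\gamma^+_{u^{-1}w_0}\bullet\pt_{w_0}$ is $W_J$-invariant for $u\in W^J$. Since $w_0u$ is maximal in its coset, we can write $w_0u=yw_J$ with $y\in W^J$, so $u^{-1}w_0=w_Jy^{-1}$ with $y^{-1}\in{}^JW$. Lemma \ref{lem:KLfactor}(2) then gives $\gamma^+_{u^{-1}w_0}=\gamma^+_{w_J}z$. Each simple $s_\al\in W_J$ is a left descent of $w_J$, so by Lemma \ref{lem:KLfactor}(1) we have $\gamma^+_{w_J}=\gamma^+_\al z'$. Since $\gamma^+_\al=(\de^\mul_\al+1)\cdot(\cdots)$, we get $\de^\mul_\al\gamma^+_\al=\gamma^+_\al$, and hence $\de^\mul_\al\bullet(\gamma^+_{u^{-1}w_0}\bullet\pt_{w_0})=\gamma^+_{u^{-1}w_0}\bullet\pt_{w_0}$. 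With this invariance, the projection formula \eqref{eq:projection} lets me pull the invariant factor inside, so the left side equals $Y^\mul_{\Pi/J}Y^\mul_J\bullet(\cdots)$. By \eqref{eq:comp} this is $Y^\mul_\Pi\bullet(\cdots)$, and part (1) finishes the proof.
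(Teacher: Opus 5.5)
Your reduction of (1) to the $\de^\mul_{w_0}$-coefficient of $\gamma^-_w\gamma^+_{v^{-1}w_0}$, and your reduction of (2) to (1), follow the paper's proof. However, the last step of (1) fails as you have written it.

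The $\bullet$-action is $Q^\mul$-linear. By \eqref{eq:action}, $v(1/x^\mul_\Pi)\de^\mul_v\bullet cf_e=\frac{c}{x^\mul_\Pi}f_{v^{-1}}$, so $Y^\mul_\Pi\bullet(cf_e)=\frac{c}{x^\mul_\Pi}\unit$. Your formula $\sum_y y(c/x^\mul_\Pi)f_y$ is the $\odot$-type formula. With it you would need $\prod_{\al>0}(t-t^{-1}e^{-\al})$ to be $W$-invariant, and it is not: already in rank one, $s_\al$ sends $t-t^{-1}e^{-\al}$ to $t-t^{-1}e^{\al}$. So the check you propose would fail.

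Likewise, do not rewrite $\pt_{w_0}$ as $\de^\mul_{w_0}\bullet\pt_e$. Under \eqref{eq:action} that equals $x^\mul_\Pi f_{w_0}$ and introduces a spurious factor $x^\mul_\Pi/w_0(x^\mul_\Pi)$. Use $\pt_{w_0}=w_0(x^\mul_\Pi)f_{w_0}$, as in \eqref{eq:w0act}. With these corrections no twist appears anywhere:
\begin{itemize}
\item the $f_e$-coefficient of $\pt_e\cdot(\gamma^-_w\gamma^+_{v^{-1}w_0}\bullet\pt_{w_0})$ is $x^\mul_\Pi\, w_0(x^\mul_\Pi)\,(\gamma^-_w\gamma^+_{v^{-1}w_0})|_{w_0}$;
\item you need this $w_0$-coefficient to be exactly $\de_{w,v}a_{w_0}$;
\item then $Y^\mul_\Pi\bullet$ gives $\de_{w,v}\prod_{\al>0}(t-t^{-1}e^{-\al})\unit$.
\end{itemize}

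You are right that the product occurs in the order opposite to Lemma~\ref{lem:KLcom}; the paper applies the lemma without comment. Your second route settles this, and conjugation by $\de^\mul_{w_0}$ is unnecessary. The argument is as follows.
\begin{itemize}
\item For $h\in\bfH$, only $\tau_{w_0}$ contributes to $\de^\mul_{w_0}$. Hence $h|_{w_0}$ is $a_{w_0}$ times the $\tau_{w_0}$-coefficient of $h$, and the anti-involution $i$ preserves that coefficient.
\item We have $i(\gamma^-_w\gamma^+_{u^{-1}w_0})=\gamma^+_{w_0u}\gamma^-_{w^{-1}}$ and $w^{-1}=(w_0w)^{-1}w_0$.
\item So Lemma~\ref{lem:KLcom}, applied to the pair $(w_0u,w_0w)$, gives exactly $\de_{w,u}a_{w_0}$.
\end{itemize}
Do not run your first route from the $\tau$-level identity you quote, because it is misindexed. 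In type $A_2$, $\tau_{s_1}\tau^{-1}_{s_1w_0}=\tau^{-1}_{s_2}$ has no $\de^\mul_{w_0}$-term. The correct statement is $(\tau_y\tau^{-1}_{w_0y'})|_{w_0}=\de_{y,y'}a_{w_0}$.

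In (2), you deduce $W_J$-invariance from $\de^\mul_\al\gamma^+_\al=\gamma^+_\al$ for $\al\in J$. This is valid, and it is a more elementary substitute for the paper's use of $\gamma^+_{w_J}=Y^\mul_J\prod(t-t^{-1}e^{\al})$.
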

\begin{proof}
(1). Note that 
\[
a_{w_0}=\frac{\prod_{\al>0}(t-t^{-1}e^{-\al})}{w_0(x_\Pi^\mul)}, 
\]
so 
\begin{equation}\label{eq:w0act}
a_{w_0}\de^\mul_{w_0}\bullet \pt_{w_0}=\prod_{\al>0}(t-t^{-1}e^{-\al})\frac{1}{w_0(x_\Pi^\mul)}\de_{w_0}^\mul\bullet w_0(x_\Pi^\mul)f_{w_0}\overset{\eqref{eq:action}}=\prod_{\al>0}(t-t^{-1}e^{-\al})f_e. 
\end{equation}
We have 
\begin{align*}
&Y_\Pi^\mul \bullet\left((\gamma_w^-\odot\pt_e)\cdot( \gamma^+_{u^{-1}w_0}\bullet \pt_{w_0})\right)\\
&\overset{\eqref{eq:inv}}=Y_\Pi^\mul \bullet\left((\iota(\gamma^-_{w^{-1}})\bullet\pt_e)\cdot ( \gamma^+_{u^{-1}w_0}\bullet \pt_{w_0})\right)\\
&\overset{\eqref{eq:inv}}=Y_\Pi^\mul \bullet \left(\pt_e\cdot (\gamma_w^-\bullet  \gamma^+_{u^{-1}w_0}\bullet \pt_{w_0})\right)\\
&=Y_\Pi^\mul \bullet \left(\pt_e\cdot (\gamma_w^-\gamma^+_{u^{-1}w_0}\bullet\pt_{w_0})\right)\\
&\overset{\eqref{eq:Kprod}}=Y_\Pi^\mul \bullet \left(\pt_e\cdot (\gamma_w^-\gamma^+_{u^{-1}w_0})|_{w_0}\de_{w_0}\bullet\pt_{w_0}\right)\\
&\overset{\text{Lem. }\ref{lem:KLcom}}=\de_{w,u}Y^\mul _\Pi\bullet \left(\pt_e\cdot (a_{w_0}\de^\mul_{w_0}\bullet \pt_{w_0})\right)\\
&\overset{\eqref{eq:w0act}}=\de_{w,u}\sum_{v\in W}\de^\mul_v\frac{1}{x_\Pi^\mul}\bullet \left(x_\Pi^\mul f_e\cdot {\prod_{\al>0}(t-t^{-1}e^{-\al})} f_e\right)\\
&=\de_{w,u}\prod_{\al>0}(t-t^{-1}e^{-\al})\unit. 
  \end{align*}

(2). Note that \[\ell(u^{-1}w_0)=\ell(w_J)+\ell(w_Ju^{-1}w_0)\] so by Lemma \ref{lem:KLfactor}, we have $\gamma^+_{u^{-1}w_0}=\gamma^+_{w_J}z$ for some $z\in Q^\mul_W$. From \cite[Proposition 4.10]{LZZ20} we know that \[\gamma_{w_J}^+=Y^\mul _J\prod_{\al>0}(t-t^{-1}e^\al),\]
so 
\[
\gamma^+_{u^{-1}w_0}\bullet \pt_{w_0}=Y_J^\mul \prod_{\al>0}(t-t^{-1}e^\al)\bullet z\bullet \pt_{w_0}\in Y_J^\mul \bullet (Q_W^\mul )^*=((Q_W^\mul )^*)^{W_J}. 
\]
It follows from the projection formula \eqref{eq:projection} that
\begin{equation}\label{eq:proj}
Y_J^\mul \bullet \left(g\cdot (\gamma^+_{u^{-1}w_0}\bullet \pt_{w_0})\right)=(Y_J^\mul \bullet g)\cdot (\gamma^+_{u^{-1}w_0}\bullet \pt_{w_0}), \quad g\in (Q_W^\mul )^*. 
\end{equation}
Therefore,  \begin{align*}
&Y^\mul _{\Pi/J}\bullet\left (Y^\mul _J\bullet (\gamma_w^-\odot\pt_e)\cdot (\gamma^+_{u^{-1}w_0}\bullet \pt_{w_0})\right)\\
&\overset{\eqref{eq:proj}}=Y^\mul _{\Pi/J}\bullet Y^\mul _J\bullet \left((\gamma^-_{w}\odot\pt_e)\cdot  (\gamma^+_{u^{-1}w_0}\bullet \pt_{w_0})\right)\\
&\overset{\eqref{eq:comp}}=Y_\Pi^\mul \bullet \left ((\gamma_w^-\odot\pt_e)\cdot (\gamma^+_{u^{-1}w_0}\bullet \pt_{w_0})\right)\\
&=\de_{w,u}\prod_{\al>0}(t-t^{-1}e^{-\al})\unit,
  \end{align*}
where the last equality follows from Part (1) of this theorem. 
\end{proof}

\begin{rem}There are similar dualities in \cite[Theorem 13 and Theorem 22]{LSZZ23}. Indeed, in loc.it., \[C_w=\gamma^+_w\odot\pt_e, \quad \widetilde{C}_w=\gamma_{w^{-1}w_0}^-\bullet \pt_{w_0}, \quad C^J_u=\frac{t_{w_J}}{P_J(t^2)}Y^\mul _J\bullet \gamma^+_{uw_J}\odot\pt_e, \quad w\in W, u\in W^J,\]
where \[P_J(t)=\sum_{v\in W_J}t_v\] is the Poincar\'e polynomial of $W_J$. Clearly for $K_T(G/B)$ there is no key difference between the definitions in \cite{LSZZ23} and in the present paper. But for $K_T(G/P_J)$, the class $\widetilde{C}^J_u$ in \cite[Definition 21]{LSZZ23} is not defined as simple as our $\gamma^+_{u^{-1}w_0}\bullet\pt_{w_0}$. Morover,   the proof in the present paper do not rely on the geometric property of motivic Chern classes, and is much simpler. 
\end{rem}

\section{Kazhdan-Lusztig basis for  Grassmannians}
In this section, we consider type $A$ Grassmannians, and use a result of Fan-Green to study properties of Kazhdan-Lusztig basis. We  assume $G$ is of type $A$ in this section.

\subsection{Temperley-Lieb algebra}
\begin{dfn}For each $w=s_{i_1}\cdots s_{i_k}$ and $I_w=(i_1,\cdots, i_k)$,  we define 
\[
\hat \gamma_{I_w}^\pm=\prod_{j=1}^k\gamma^\pm_{i_j}.
\]
\end{dfn}
Since the elements $\gamma_i^\pm$ do not satisfy the braid relations, this definition depends on the choice of the reduced sequence, and furthermore, $\hat\gamma_{I_w}^\pm\neq \gamma_w^\pm$ in general. Note that both $\gamma_w^\pm$ and $\hat \gamma_{I_w}^\pm$ are bases of $\bfH$ and of $Q_W^\mul$. 
\begin{example}Consider $A_2$ case, then all Kazhdan-Lusztig polynomials are equal to 1, and it is well-known that 
\[
\gamma_{s_1s_2s_1}^-=\sum_{w\le s_1s_1s_1}\ep_v\ep_{s_1s_2s_1}t_{s_1s_2s_1}^{-1}t_w\tau_w=\hat \gamma_{s_1s_2s_1}^--\gamma_{s_1}^-\neq \hat \gamma^-_{s_1s_2s_1}. 
\]
\end{example}

Denote  by $W_c$ the set of $iji$-avoiding  elements. They are also called  fully commutative elements.  Let  $\calI$ be the two-sided ideal of $\bfH$ generated by elements of the form
\[
\gamma^-_{w_J}
\]
where $W_J$ is a rank 2  parabolic of $W$ not isomorphic to $A_1\times A_1$. The quotient $\TL=\bfH/\calI$ is called the Temperley-Lieb (TL) algebra. Denote the projection by $p:\bfH\to \TL$. By \cite[Proposition 3.1.1]{FG97}, 
\begin{equation}
\label{eq:I}\calI=R\{\gamma_w^-~| ~w\in W\backslash W_c\}.
\end{equation}
So $\TL$ has a $R$-basis $p(\gamma^-_w), w\in W_c$\footnote{Note that in \cite{FG97} the algebra $\TL$ is defined to be $\bfH/\calI'$ where $\calI'$ is the two-sided ideal generated by $\gamma^+_{w_J}$ for all rank 2 parabolic of $W$ that are not isomorphic to $A_1\times A_1$. However, \cite[Theorem 3.8.2]{FG97} still holds in our case, and this  suffices for the present paper.}.

Equivalently, denoting $E_i=p(\gamma_i^-)$, then $\TL$ is generated by $E_1,...,E_n$ subject to relations 
\[
E_i^2=-(t+t^{-1})E_i, ~E_iE_{i\pm 1}E_i=E_i, ~E_iE_j=E_jE_i~ \text{ if } |i-j|>1.
\]
So $E_i$ satisfy the braid relations. Moreover, if $I_w$ is a reduced sequence of $w$, then $E_w:=p(\hat\gamma^-_{I_w}), w\in W_c$ form a basis of $\TL$. Note that for $w\in W_c$ which are $iji$-avoiding, they are fully commutative, so $\hat \gamma^-_{I_w}$ does not depend on the choice of reduced sequences of $w$. 

The following lemma shows that the Kazhdan-Lusztig basis for $w\in W_c$ becomes `multiplicatively' generated in the TL algebra. 	We are grateful to Rui Xiong for pointing out this to us. 
\begin{lem} \cite[Theorem 3.8.2]{FG97} \label{lem:FG}For any $w\in W_c$, fix a reduced sequence $I_w$, then  in $\TL$, 
\[p(\gamma_w^-)=E_w=p(\hat \gamma^-_{I_w}). \]
\end{lem}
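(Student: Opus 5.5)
We argue by induction on $\ell(w)$ for $w\in W_c$. The second equality $E_w=p(\hat\gamma^-_{I_w})$ is the definition of $E_w$, and it does not depend on $I_w$ because $w$ is fully commutative and the $E_i$ satisfy the commutation relations. So the content is $p(\gamma^-_w)=E_w$.

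The base cases $\ell(w)\le 1$ are immediate: $\gamma^-_e=1$ and $\gamma^-_{s_i}=\gamma^-_i$. For the inductive step, write $w=vs$ with $s=s_i$, $\ell(w)=\ell(v)+1$ and $v\in W_c$ (any reduced prefix of a fully commutative element is fully commutative). We then use the Kazhdan--Lusztig multiplication formula, in the $\gamma^-$ normalization:
\[
\gamma^-_v\gamma^-_{i}=\gamma^-_{vs}+\sum_{y<v,\ ys<y}\mu(y,v)\,\gamma^-_y .
\]
The signs and powers of $t$ come from the convention $\gamma^-_i=\tau_i-t^{-1}$. Applying $p$ and using \eqref{eq:I}, every term with $y\notin W_c$ vanishes. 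This gives
\[
E_vE_i=p(\gamma^-_v)E_i=p(\gamma^-_w)+\sum_{y\in W_c,\ y<v,\ ys<y}\mu(y,v)\,p(\gamma^-_y).
\]
By induction the left side equals $E_w$, so it suffices to show that the correction sum vanishes.

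This vanishing is the main obstacle. We need $\mu(y,v)=0$ whenever $y,v\in W_c$, $y<v$, $ys<y$, $vs>v$ and $vs\in W_c$. Our first attempt is the known fact, due to Fan--Green in type $A$, that for fully commutative $y<v$ with $\ell(v)-\ell(y)$ odd, $\mu(y,v)\neq0$ forces $y$ and $v$ to be related by left or right multiplication by a single simple reflection in a specific heap configuration. In that configuration $v$ has $s$ as a right descent whenever $y$ does, contradicting $vs>v$. If this is too delicate, the fallback is to avoid $\mu$ altogether. Instead we show directly that $p(\gamma^-_w)$ is the unique element of $\TL$ that is fixed by the bar involution (which preserves $\calI$) and is congruent to $\pm p(\tau_w)$ modulo $t\bbZ[t]$-combinations of $p(\tau_y)$. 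We then check that the monomial $E_w$ has the same two properties, using the fact that bar-invariance is multiplicative and that the triangularity of products of $E_i$ follows from the heap structure of fully commutative elements. This is exactly the route of \cite[Theorem 3.8.2]{FG97}, which we may cite for the technical positivity and triangularity statement.
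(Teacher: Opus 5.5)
Your induction is set up correctly, but on its own it proves nothing. Assume $p(\gamma^-_v)=E_v$, and use the multiplication formula, \eqref{eq:I}, and the linear independence of $\{p(\gamma^-_y)\}_{y\in W_c}$. Then $p(\gamma^-_{vs})=E_{vs}$ is \emph{equivalent} to $\mu(y,v)=0$ for all $y\in W_c$ with $y<v$ and $ys<y$. Conversely, the lemma forces this vanishing.

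So all the content sits in that vanishing, and the fact you quote for it is false. In $S_4$, $y=s_2$ and $v=s_2s_1s_3s_2$ are fully commutative and $P_{y,v}(t)=1+t$. Hence $\mu(y,v)=1$, although $\ell(v)-\ell(y)=3$ and $y,v$ do not differ by one simple reflection. Even when they do differ by one simple reflection, your descent claim fails:
\begin{itemize}
\item $\mu(s_2,s_2s_1)=1$ with $s=s_2$;
\item $\mu(s_2s_1s_3,s_2s_1s_3s_2)=1$ with $s=s_1$.
\end{itemize}
In both cases $ys<y$ but $vs>v$. General Kazhdan--Lusztig theory only controls the opposite situation, where $s$ is a descent of $v$ but not of $y$. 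These examples are compatible with the lemma only because $vs\notin W_c$ there; they are the relations $E_2E_1E_2=E_2$ and $E_{s_2s_1s_3s_2}E_1=E_{s_2s_1s_3}$. So a correct argument must use the hypothesis $vs\in W_c$ in an essential way, and your first route never does.

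Your fallback is, in outline, Fan--Green's route, but its only nontrivial step is the whole content of \cite[Theorem 3.8.2]{FG97}. That step is: the monomial $E_w$ is unitriangular with respect to $\{p(\tau_y)\}_{y\in W_c}$, with off-diagonal coefficients in the right lattice. It is not a formal consequence of bar-invariance; the analogous statement fails for $\hat\gamma^-_{I_w}$ in $\bfH$ (cf.\ the $A_2$ example), and it depends on the Temperley--Lieb relations. Citing FG97 for it is citing the lemma itself. Moreover, with the paper's normalization the lattice is $t^{-1}\bbZ[t^{-1}]$, not $t\bbZ[t]$: $\gamma^-_w\in\tau_w+\sum_{y<w}t^{-1}\bbZ[t^{-1}]\,\tau_y$. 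Already $E_i=p(\tau_i)-t^{-1}$ violates the condition you wrote.

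For comparison, the paper gives no argument at all. It quotes \cite[Theorem 3.8.2]{FG97}, noting in a footnote that Fan--Green quotient by the ideal generated by the $\gamma^+_{w_J}$. If you rely on that citation, the clean way is to transport it. The $R$-algebra automorphism $\tau_i\mapsto-\tau_i^{-1}$ commutes with the bar involution and sends $\gamma^+_w\mapsto\ep_w\gamma^-_w$ and $\hat\gamma^+_{I_w}\mapsto\ep_w\hat\gamma^-_{I_w}$. It therefore maps Fan--Green's ideal onto $\calI$, and their statement for the $\gamma^+$-version onto $p(\gamma^-_w)=E_w$.
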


\subsection{Kazhdan-Lusztig basis for Grassmannians}

In the remaining part of this section, we assume $J$ is a maximal proper subset of $\Pi$, in other words, $G/P_J$ is a Grassmannian. The following lemma is well-known. 

\begin{lem}\label{lem:comb}For any maximal proper subset $J\subset\Pi$, we have 
\[
W^J\subset W_c. 
\]
\end{lem}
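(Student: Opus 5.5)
The plan is to work in type $A_n$ with $W=S_{n+1}$, where $s_i$ is the transposition $(i,i+1)$, and $J=\Pi\setminus\{\al_k\}$, so that $W_J=S_k\times S_{n+1-k}$. Elements $w\in W^J$ are the minimal length representatives of left cosets $wW_J$. They are characterized by the condition that $w$ has no right descent in $J$, i.e.\ $ws_j>w$ for all $j\ne k$. In one-line notation this means $w(1)<\cdots<w(k)$ and $w(k+1)<\cdots<w(n+1)$. Such a $w$ is a Grassmannian permutation, with at most one descent, located at position $k$.

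Next we invoke the standard characterization of fully commutative ($iji$-avoiding) elements in type $A$ due to Billey--Jockusch--Stanley: $w\in S_{n+1}$ is fully commutative if and only if it is $321$-avoiding. By this we mean there are no indices $a<b<c$ with $w(a)>w(b)>w(c)$. Suppose $w$ is Grassmannian and such a pattern occurred. Since $w$ is increasing on $\{1,\dots,k\}$ and on $\{k+1,\dots,n+1\}$, among the three positions $a<b<c$ two must lie in the same block. On that block $w$ is increasing, so those two values appear in increasing order, which contradicts $w(a)>w(b)>w(c)$. Hence every Grassmannian permutation is $321$-avoiding, and therefore $W^J\subset W_c$.

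If one prefers to avoid quoting the $321$ criterion, the direct route is as follows. Suppose a reduced word for $w\in W^J$ contains a consecutive factor $s_is_{i\pm1}s_i$. Then $w=u\,s_is_{i\pm1}s_i\,v$ reduced. Using $s_is_{i+1}s_i=s_{i+1}s_is_{i+1}$, we can push a braid move so that $w$ acquires two distinct reduced words ending in different letters after commuting moves. One then shows that $w$ has a right descent set containing some $s_j$ with $j\ne k$, or containing two letters, which is impossible since the right descent set of $w\in W^J$ is contained in $\{s_k\}$.

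This descent-pushing argument is the fiddly step, because tracking the letters $v$ to the right requires care. For that reason I would rely on the $321$-avoidance characterization, which is the main (cited) ingredient. The remaining verification is the elementary block argument given above.
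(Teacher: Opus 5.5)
The paper gives no proof of this lemma; it only calls it well known, so there is no argument to compare yours against. Your proof is correct and is the standard justification. With $J=\Pi\setminus\{\alpha_k\}$, the condition $ws_j>w$ for all $j\neq k$ is exactly $w(j)<w(j+1)$ for $j\neq k$. Your pigeonhole argument (two of three positions $a<b<c$ lie in one increasing block) correctly rules out a $321$ pattern. The Billey--Jockusch--Stanley criterion then finishes the proof, once combined with Stembridge's equivalence between full commutativity and having no reduced word containing a factor $s_is_{i\pm1}s_i$, which is the paper's $iji$-avoiding condition. The left/right coset convention for $W^J$ is irrelevant here: reversing a reduced word shows that $W_c$ is closed under $w\mapsto w^{-1}$.

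Your ``direct route'' paragraph, however, is not an argument. Its conclusion, that $w$ has a right descent other than $s_k$, is just the contrapositive of the lemma, so you should drop or replace it. If you want to avoid quoting Billey--Jockusch--Stanley, you only need the easy direction, which has a short proof.

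Suppose $w=u\,x\,v$ is reduced with $x=s_is_{i+1}s_i$; the case $s_is_{i-1}s_i=s_{i-1}s_is_{i-1}$ is the same. Then $xv$ is reduced. Since right multiplication of $v^{-1}$ by $x$ reverses positions $i,i+1,i+2$, this forces $p_1<p_2<p_3$, where $p_r=v^{-1}(i+r-1)$, and $xv$ sends $p_1,p_2,p_3$ to $i+2,i+1,i$.

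Since $\ell(w)=\ell(u)+\ell(xv)$, no inversion (pair of positions) of $xv$ can be undone by $u$. Hence $w(p_1)>w(p_2)>w(p_3)$, which is a $321$ pattern in $w$. Combined with your block argument, this gives a fully self-contained proof.
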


\begin{thm}\label{thm:p}
Let $J$ be maximal proper in $\Pi$ . 
The canonical map of $R$-modules 
\[\rho:\bfH\twoheadrightarrow \bfH\gamma_{w_J}^+, \quad z\mapsto z\gamma_{w_J}^+\] induces a commutative diagram
\[
\xymatrix{\bfH\ar@{->>}[r]^-{\rho} \ar@{->>}[d]^-{p} &\bfH\gamma_{w_J}^+\subset \bfH\\
\TL\ar[ru]_-{\rho'} &}. 
\]
\end{thm}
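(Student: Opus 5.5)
The commutative diagram amounts to showing that $\rho$ factors through $p$. Both maps are surjective $R$-linear maps out of $\bfH$, so it suffices to prove $\ker p=\calI\subset\ker\rho$, that is,
\[
\calI\,\gamma^+_{w_J}=0 .
\]
Once this holds, $\rho'(p(z)):=z\gamma^+_{w_J}$ is well defined and $R$-linear, and $\rho=\rho'\circ p$ by construction. Indeed $\rho'$ is even a map of left $\bfH$-modules, since $\calI$ is a left ideal.

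To prove the inclusion I will use the description \eqref{eq:I} of $\calI$ from Fan--Green rather than its definition by generators. By \eqref{eq:I}, $\calI$ is spanned over $R$ by the $\gamma^-_w$ with $w\in W\backslash W_c$. It therefore suffices to show $\gamma^-_w\gamma^+_{w_J}=0$ for each $w\notin W_c$.

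Fix such a $w$ and write $w=uv$ with $u\in W^J$ and $v\in W_J$, where $\ell(w)=\ell(u)+\ell(v)$. If $v=e$, then $w=u\in W^J\subset W_c$ by Lemma \ref{lem:comb}, which is a contradiction. Hence $v\neq e$, so there is a simple root $\al_j\in J$ with $vs_j<v$, and therefore $ws_j<w$. By Lemma \ref{lem:KLfactor}(1), $\gamma^-_w=z^-\gamma^-_j$ for some $z^-$. Since $s_jw_J<w_J$, the same lemma gives $\gamma^+_{w_J}=\gamma^+_j z^+$. Combining these with the identity $\gamma^-_j\gamma^+_j=0$ yields
\[
\gamma^-_w\gamma^+_{w_J}=z^-\gamma^-_j\gamma^+_j z^+=0 .
\]

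The main subtlety lies in the factorizations of Lemma \ref{lem:KLfactor}, which take place in $Q^\mul_W$ rather than in $\bfH$. This is harmless, because the final identity is an equality in $Q_W^\mul\supset\bfH$, and the product is associative there. The only remaining input is the combinatorial fact that every non-fully-commutative element has a right descent in $J$, which is exactly where the Grassmannian hypothesis enters through Lemma \ref{lem:comb}.
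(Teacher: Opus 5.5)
Your proposal is correct and follows the paper's proof essentially verbatim. You reduce via \eqref{eq:I} to showing $\gamma^-_w\gamma^+_{w_J}=0$ for $w\notin W_c$, use Lemma \ref{lem:comb} to get a right descent $s_j$ with $j\in J$, factor both elements by Lemma \ref{lem:KLfactor}, and conclude from $\gamma^-_j\gamma^+_j=0$. Your $w=uv$ decomposition only makes explicit the step the paper states as ``$w\notin W^J$ implies $ws_j<w$ for some $j\in J$.''
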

\begin{proof} 
Note that \eqref{eq:I} says that  \[\ker p=\calI=R\{\gamma_w^-|w\not \in W_c\}.\] 
It suffices to prove that $\rho(\gamma_w^-)=0, \forall w\not\in W_c$. By Lemma \ref{lem:comb},  $w\not\in W_c$ implies that  $w\not\in W^J$  . Therefore, there exists $j\in J$ so that $ws_j<w$. Clearly,  $s_jw_J<w_J$, then  Lemma \ref{lem:KLfactor}  gives 
\[
\gamma_{w}^-=z_1\gamma_j^-, \quad \gamma^+_{w_J}=\gamma^+_jz_2, \quad z_1, z_2\in Q_W^\mul . 
\]
So 
\[\rho(\gamma_w^-)=\gamma_w^-\gamma_{w_J}^+=z_1\gamma_j^-\gamma_j^+z_2=z_10z_2=0.\]
\end{proof}
\begin{rem}The left $\bfH$-module $\bfH\gamma^+_{w_J}$ is nothing but the module considered by Deodhar \cite{D79} and Seorgel \cite{S97}. See \cite{LZZ20} for more details. 
\end{rem}

\begin{cor}\label{cor:p}Let $J\subset\Pi$ be maximal proper, then for any $u\in W^J$, we have $\gamma_u^-\gamma_{w_J}^+=\hat \gamma_{I_u}^-\gamma_{w_J}^+$. 
\end{cor}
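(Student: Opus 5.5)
The plan is to deduce the corollary directly from Theorem \ref{thm:p} combined with Lemma \ref{lem:FG} and Lemma \ref{lem:comb}. Fix $u\in W^J$ and a reduced sequence $I_u$. By Lemma \ref{lem:comb}, $u\in W_c$, so Lemma \ref{lem:FG} applies and gives $p(\gamma_u^-)=E_u=p(\hat\gamma^-_{I_u})$ in $\TL$. In other words, the difference $\gamma_u^--\hat\gamma^-_{I_u}$ lies in $\ker p=\calI$.

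Next I would use the commutative diagram of Theorem \ref{thm:p}. It says $\rho=\rho'\circ p$, and in particular $\rho$ vanishes on $\ker p$. Hence
\[
\gamma_u^-\gamma_{w_J}^+=\rho(\gamma_u^-)=\rho'(p(\gamma_u^-))=\rho'(p(\hat\gamma^-_{I_u}))=\rho(\hat\gamma^-_{I_u})=\hat\gamma^-_{I_u}\gamma_{w_J}^+,
\]
which is exactly the claim. One could equally phrase it as $\rho(\gamma_u^--\hat\gamma^-_{I_u})=0$, because the difference lies in $\calI$ and, by \eqref{eq:I}, is an $R$-combination of $\gamma_w^-$ with $w\notin W_c$. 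The proof of Theorem \ref{thm:p} shows $\gamma_w^-\gamma_{w_J}^+=0$ for each such $w$.

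There is essentially no obstacle here. The only point needing care is the footnote's remark that \cite[Theorem 3.8.2]{FG97} (Lemma \ref{lem:FG}) holds with the paper's convention $\calI$ generated by $\gamma^-_{w_J}$ rather than $\gamma^+_{w_J}$, and this is taken as given. I would also note that the right-hand side is independent of the choice of reduced sequence $I_u$, since $u$ is fully commutative.
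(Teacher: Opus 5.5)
Your proposal is correct and is essentially the paper's own proof: both combine the factorization $\rho=\rho'\circ p$ from Theorem~\ref{thm:p} with the identity $p(\gamma_u^-)=p(\hat\gamma^-_{I_u})$ from Lemma~\ref{lem:FG}. You also state explicitly the appeal to Lemma~\ref{lem:comb} to get $u\in W_c$, a step the paper leaves implicit.
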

\begin{proof}Note that for any $z\in \bfH$,  \[\gamma^+_{w_J}=\rho(z)=\rho'(p(z)),\] and Lemma \ref{lem:FG} gives $p(\gamma_u^-)=p(\hat\gamma_{I_u}^-)$, so \[\gamma_u^-\gamma_{w_J}^+=\rho'(p(\gamma^-_u))=\rho'(p(\hat\gamma^-_{I_w}))=\hat \gamma_{I_u}^-\gamma_{w_J}^+.\]
\end{proof}

\begin{assumption}\label{assump:comp}
 We say a set of sequences $I_w$ for $w\in W$ is  $J$-compatible if for any $w=uv$ with $u\in W^J, v\in W_J$, the reduced sequence $I_w$ is the concatenation of $I_u$ and $I_v$.
\end{assumption}
If the sequences $\{I_w, w\in W\}$ are $J$-compatible, then  
\[\hat \gamma_{I_{uv}}=\hat\gamma_{I_u}\hat\gamma_{I_v}, \quad u\in W^J, v\in W_J. \]

\begin{lem}\label{lem:b}Let $J\subset \Pi$ be maximal proper,  $w\in W$ and fix a set of $J$-compatible sequences $I_w, w\in W$. Write 
\begin{equation}\label{eq:b}
\de_w^\mul =\sum_{u\in W^J, v\in W_J}\hat b^\mul _{w,I_{uv}}\hat \gamma^-_{I_{uv}}, \quad \de^\mul _w=\sum_{u\in W^J, v\in W_J}b^\mul _{w,uv}\gamma_{uv}^-, \quad \hat b^\mul _{w,I_{uv}}, b^\mul _{w,uv}\in Q^\mul, 
\end{equation}
then 
\[
\hat b^\mul _{w,I_u}=b^\mul _{w,u}, \quad u\in W^J.
\]
\end{lem}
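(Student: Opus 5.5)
Multiplying both expansions in \eqref{eq:b} on the right by $\gamma^+_{w_J}$ should kill every term with $v\neq e$, and Corollary \ref{cor:p} should then identify the surviving $u$-terms of the two expansions.

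\emph{Step 1: the terms with $v\neq e$ vanish.} Let $u\in W^J$ and $e\neq v\in W_J$. Then there is $j\in J$ with $vs_j<v$. Since $\ell(uv)=\ell(u)+\ell(v)$, we also get $uvs_j<uv$. Lemma \ref{lem:KLfactor}(1) gives $\gamma^-_{uv}=z_1\gamma^-_j$. For the hatted version we use $J$-compatibility: $\hat\gamma^-_{I_{uv}}=\hat\gamma^-_{I_u}\hat\gamma^-_{I_v}$. Ideally the reduced sequence $I_v$ ends in such a $j$, so $\hat\gamma^-_{I_v}=z'\gamma^-_j$. In general we cannot guarantee this, since $I_v$ is arbitrary. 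Instead I would use that the elements $\hat\gamma^-_{I_v}$ lie in the parabolic subalgebra $\bfH_J$ and that $\gamma^-_j\gamma^+_j=0$. The key is that $\gamma^+_{w_J}$ is annihilated on the left by each $\gamma^-_j$, $j\in J$: by Lemma \ref{lem:KLfactor}(1) we have $\gamma^+_{w_J}=\gamma^+_jz_2$, so $\gamma^-_j\gamma^+_{w_J}=0$. Hence $\gamma^-_j\gamma^+_{w_J}=0$ for all $j\in J$, and any product $\hat\gamma^-_{I_v}$ with $v\neq e$ ends in some $\gamma^-_j$, $j\in J$. Since $I_v$ is a reduced word for $v\in W_J$, all its letters lie in $J$. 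Thus $\hat\gamma^-_{I_{uv}}\gamma^+_{w_J}=0$ for $v\neq e$, and likewise $\gamma^-_{uv}\gamma^+_{w_J}=z_1\gamma^-_j\gamma^+_{w_J}=0$.

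\emph{Step 2: compare the remaining terms.} Multiplying the two expansions of $\de^\mul_w$ on the right by $\gamma^+_{w_J}$ therefore gives
\[
\sum_{u\in W^J}\hat b^\mul_{w,I_u}\hat\gamma^-_{I_u}\gamma^+_{w_J}=\de^\mul_w\gamma^+_{w_J}=\sum_{u\in W^J}b^\mul_{w,u}\gamma^-_u\gamma^+_{w_J}.
\]
By Corollary \ref{cor:p}, $\hat\gamma^-_{I_u}\gamma^+_{w_J}=\gamma^-_u\gamma^+_{w_J}$ for each $u\in W^J$. Subtracting the two sides,
\[
\sum_{u\in W^J}\bigl(\hat b^\mul_{w,I_u}-b^\mul_{w,u}\bigr)\gamma^-_u\gamma^+_{w_J}=0 .
\]

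\emph{Step 3 (the main obstacle): independence.} It remains to show that $\{\gamma^-_u\gamma^+_{w_J}\}_{u\in W^J}$ is linearly independent over $Q^\mul$ inside $Q^\mul_W$; the coefficients then agree and the lemma follows. I would prove this by a triangularity argument in the $\de^\mul$-basis. We have $\gamma^+_{w_J}=Y^\mul_J\prod_{\al>0}(t-t^{-1}e^{\al})$, which is a sum over $W_J$ of $\de^\mul_y$ with invertible coefficients. Also $\gamma^-_u=c_u\de^\mul_u+\sum_{y<u}(\cdots)\de^\mul_y$ with $c_u$ invertible in $Q^\mul$, since this is the leading term of $\tau_u$. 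Hence $\gamma^-_u\gamma^+_{w_J}$ has a nonzero coefficient at $\de^\mul_{uw_J}$. Every other term involves $\de^\mul_{yx}$ with $y<u$ and $x\in W_J$, and such $yx$ lies in a coset whose minimal representative is $\le u$ in Bruhat order. Ordering $W^J$ by Bruhat order and looking at each $\de^\mul_{u'}$ with $u'$ in the coset $uW_J$ then shows the family is triangular with invertible diagonal, hence independent.

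The delicate point is this Bruhat-order bookkeeping: one needs that the projection $W\to W^J$ is order-preserving, so that no lower term can reach the coset $uW_J$. If that becomes messy, the fallback is to apply $\bullet$ to $\pt_{w_0}$ and use the duality of Theorem \ref{thm:Kdual}(2) to extract the coefficients directly.
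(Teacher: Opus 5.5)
Your proposal is correct and is essentially the paper's proof: right-multiply both expansions by $\gamma^+_{w_J}$, kill the $v\neq e$ terms using $\gamma^-_j\gamma^+_{w_J}=0$ for $j\in J$ (via Lemma \ref{lem:KLfactor} and $J$-compatibility), and identify the surviving terms by Corollary \ref{cor:p}. Your Step 3 additionally supplies the $Q^\mul$-linear independence of $\{\gamma^-_u\gamma^+_{w_J}\}_{u\in W^J}$, which the paper's concluding ``therefore'' uses without proof; your triangularity argument at $\de^\mul_{uw_J}$ is sound, and it only needs that the minimal representative of $yW_J$ lies Bruhat-below $y$, not full order-preservation of $W\to W^J$.
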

\begin{proof}  From Lemma \ref{lem:KLfactor}, if $v\neq e$, we know that $\gamma^-_{uv}=z\gamma^-_\al$ with $z\in Q_W^\mul , \al\in J$, so \[\gamma^-_{uv}\gamma^+_{w_J}=z\gamma_\al^-\gamma^+_{w_J}=0.\] Similarly, if $v\neq e$, since $\hat \gamma^-_{I_{uv}}=\hat\gamma^-_{I_u}\hat\gamma^-_{I_v}$, so \[\hat \gamma^-_{I_{uv}}\gamma^+_{w_J}=\hat \gamma^-_{I_{u}}\hat\gamma^-_{I_v}\gamma^+_{w_J}=\hat\gamma_{I_u}^-\cdot 0=0. \]Right multiplying $\gamma^+_{w_J}$ on the right of the identities in \eqref{eq:b}, we obtain
\[
\de_w^\mul \gamma^+_{w_J}=\sum_{u\in W^J}\hat b_{w,I_u}^\mul \hat \gamma^-_{I_u}\gamma^+_{w_J}, \quad \de^\mul _w\gamma^+_{w_J}=\sum_{u\in W^J}b^\mul _{w,u}\gamma_{u}^-\gamma^+_{w_J},
\]
By Corollary  \ref{cor:p},  we know that 
\[
\hat\gamma_{I_u}^- \gamma^+_{w_J}=\gamma_u^- \gamma^+_{w_J}, \quad \forall u\in W^J. 
\]
Therefore, $\hat b^\mul _{w,I_u}=b_{w,u}^\mul $.
\end{proof}

\begin{rem}
Note that to define $\hat b^\mul_{w,I_u}, u\in W^J$, one has to define $\hat \gamma^-_{I_{uv}}$ for all $u\in W^J, v\in W_J$, even though Lemma \ref{lem:comb} implies that  $u\in W^J$ are fully commutative. However,  Lemma \ref{lem:b}  implies that as long as the sequences $I_{uv}$ are J-compatible, $\hat b^\mul_{w,I_u}=b^\mul_{w,u}$ for any $w\in W, u\in W^J$. 
\end{rem}

\section{Hyperbolic KL-Schubert classes for Grassmannians}
In this section, we  define the KL-Schubert classes for the hyperbolic cohomology. Our definition is different from the previous one in \cite{LZZ20} and \cite{LSZZ23}, by swapping $\gamma^+_w$ by $\gamma^-_w$. We then establish the Poincar\'e duality, and then prove the Billey-type formula. 

\subsection{Hyperbolic Demazure algebra} We recall some notion from \cite{LZZ20}. Starting from now, let \[R=\bbZ[t, t^{-1}, (t+t^{-1})^{-1}].\]  It will not affect any result in the previous sections. Consider the hyperbolic formal group law over $R$:  \[F_\hy=\frac{x+y-xy}{1-(t+t^{-1})^{-2}xy}.\]
Denote $\mu=t+t^{-1}$ and $\mu_u=\mu^{\ell(u)}$. 

 Define the formal group algebra \[S^\hy=R[[x_\la|\la\in \La]]/\frakI,  \]  where $\frakI$ is the completion of the ideal generated by $x_0$ and \[ F_\hy(x_\la,x_\mu)-x_{\la+\mu},~ \la,\mu\in \La.\] This ring is the algebraic replacement of $\hy_T(\pt)$ where $\hy$ is the oriented cohomology corresponding to the hyperbolic formal group law, and $x_\la$ is the first hyperbolic Chern class of the line bundle over the classifying space determined by $\la$. One can then define $Q^\hy=Frac(S^\hy)$, $Q^\hy_W=Q^\hy\rtimes R[W]$ with basis $\de^\hy_w, w\in W$, and the divided difference operator $X_\al$ and push-pull operator $Y_\al$ for simple roots $\al$:  \[X_\al=\frac{1}{x_\al}(\de_\al^\hy-1), ~Y_\al=1+X_\al.\]
We have $X_\al^2=-X_\al$. These operators do NOT satisfy the braid relations. For each $w$, fixing a reduced sequence $I_w$, one then defines $X_{I_w}$ and $Y_{I_w}$. Denote $X_i=X_{\al_i}, Y_i=Y_{\al_i}$.  The subalgebra $\bfD_\hy$ of $Q_W^\hy$ generated by $X_{\al_i}, \al_i\in \Pi$ is called the formal Demazure algebra for the hyperbolic formal group law.

One can also define the dual $(Q_W^\hy)^*=\Hom(W,Q^\hy)$, which is a $Q^\hy$-module with basis $f_w, w\in W$ and componentwise product. The unit $\sum_wf_w$ is again denoted by $\unit$, which is the basis that spans $((Q^\hy_W)^*)^W$. The two actions $\bullet$ and $\odot$ can be similarly defined in this case, that is, $Q_W^\hy$ acts on $(Q_W^\hy)^*$ via the $\bullet$ and $\odot$ action. 

Similar as the multiplicative case, we define \[\pt_{e}=x_\Pi^\hh f_e,~ \pt_w=\de_w^\hy\bullet\pt_e=w(x_\Pi^\hy) f_w\in (Q_W^\hy)^*,  ~Y^\hy_J=\sum_{w\in W_J}\de_w^\hy \frac{1}{x^\hy _J}, \quad x^\hy _J=\prod_{\al<0, \al\in \Sigma_J}x_\al\in S^\hy. \]
If $J=\Pi$, denote $Y_J^\hy$ by $Y_\Pi^\hy$.  Again, the map $Y^\hh_\Pi\bullet \_~:(Q_W^\hy)^*\to ((Q_W^\hy)^*)^W\cong Q^\hy\unit$ is the algebraic model for the push-forward to the base point. Moreover, one can define the involution $\iota:Q^\hy_W\to Q^\hy_W$ so that the properties in \eqref{eq:inv} still holds. 

\subsection{KL-Schubert classes in hyperbolic cohomology}
There is a map of formal group laws  \[g:F_\hy(x,y)\to F_\mul(x,y)=x+y-xy,~ ~g(x)=\frac{(1-t^2)x}{x-(t^2+1)}, \] so that \[F_\mul(g(x), g(y))=g(F_\hy(x,y)).\] 
It induces a map
\[
\psi:S^\mul\to S^\hy, \quad f(x_\la)\mapsto f(g(x_\la)),~\forall f\in R[[x]],
\]
where we denote by $x_\la=1-e^{-\la}$ in the domain $S^\mul$, and also the same notation $x_\la$ in the target $S^\hy$. 
It also induces a ring homomorphism $\psi: Q^\mul\to  Q^\hy$, and hence a map of abelian groups:
\[
\psi:Q^\mul_W\to Q_W^\hy, ~a\de_w^\mul\mapsto \psi(a)\de_w^\hy,~ a\in Q^\mul. 
\]
It is easy to verify that 
\[
\psi(\tau_\al)=\mu Y_\al-t.
\]
so it induces a ring homomorphism:
\begin{equation}\label{eq:hyXY}\psi:Q_W^\mul\supset\bfH\to \bfD_h\subset Q_W^\hy, \quad \psi(\gamma^+_\al)= \mu Y_\al, \quad \psi(\gamma_\al^-)=\mu X_\al.\end{equation}
 In particular, we have 
\begin{equation}\label{eq:psigamma}\psi(\hat \gamma_{I_u}^-)=\mu_u X_{I_u}, \quad  u\in W, 
\end{equation}
where both of the two depend on the choice of reduced sequence. 
Since $\gamma_u^-, u\in W$ is a basis of $Q_W^\mul$, $\psi(\gamma_u^-)$ is a basis of $Q^\hy_W$. Let $\psi(\gamma_u^-)^*:=(\psi(\gamma_u^-))^*\in (Q_W^\hy)^*$ be the dual basis. Since $\gamma_u^-\neq \hat \gamma_{I_u}^-$ in general, so $\psi(\gamma_u^-)$ maybe different from $\mu_u X_{I_u}$. 
\begin{rem}
 One can also define the operators $X_\al$ and $Y_\al$ in $K$-theory, and they will permute the ideal  sheaf class and the structure sheaf class in $K_T(G/B)$, respectively. Based on this fact, hyperbolic case, \eqref{eq:hyXY}  shows that the two versions of Kazhdan-Lusztig bases actually correspond to ideal  sheaf class and structure sheaf class. 
\end{rem}

\begin{thm}\label{thm:hyper}
\begin{enumerate}
\item The following two classes are dual in $\hh_T(G/B)$: 
\[
\calC_w:= \mu_w^{-1}\psi(\gamma^{-}_w)\odot\pt_e, \quad \tilde\calC_u:=\mu_{u^{-1}w_0}^{-1}\psi(\gamma^+_{u^{-1}w_0})\bullet \pt_{w_0}. 
\]
That is, 
\[
Y^\hy_\Pi\bullet (\calC_w\cdot \tilde \calC_u)=\de_{w,u}\unit. 
\]
\item The following two classes are dual in $\hh_T(G/B)$. That is, 
\[
Y_\Pi^\hy\bullet (\calC_w \cdot \psi(\gamma_u^-)^*)=\de_{w,u}\mu_w^{-1}\unit.
\]
\end{enumerate}
\end{thm}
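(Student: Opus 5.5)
Part (1) will be obtained by transporting the $K$-theory computation of Theorem \ref{thm:Kdual}(1) through $\psi$. Since $\psi:Q^\mul_W\to Q^\hy_W$ is $\psi(a)\de^\mul_w\mapsto \psi(a)\de^\hy_w$, it is multiplicative on all of $Q^\mul_W$: we have $\psi(a\de_w\, b\de_v)=\psi(a)w(\psi(b))\de_{wv}$, and $\psi$ commutes with the $W$-action on $Q$ because $g$ is applied to each $x_\la$. In particular $\psi$ preserves the coefficient of $\de_{w_0}$. Applying $\psi$ to Lemma \ref{lem:KLcom} therefore gives
\[
\bigl(\psi(\gamma^-_w)\psi(\gamma^+_{u^{-1}w_0})\bigr)|_{w_0}=\de_{w,u}\psi(a_{w_0}).
\]
Note that Lemma \ref{lem:KLcom} is stated for $\gamma^+\gamma^-$. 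The same proof, with the roles of $\pm$ swapped, gives the version for $\gamma^-_w\gamma^+_{u^{-1}w_0}$ that was actually used in Theorem \ref{thm:Kdual}; the relevant identity is symmetric in $P$ under $t\leftrightarrow t^{-1}$ up to the sign factors.

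Next I run the chain of equalities from Theorem \ref{thm:Kdual}(1) verbatim in $(Q_W^\hy)^*$, using the hyperbolic analogues of \eqref{eq:inv}:
\begin{align*}
Y_\Pi^\hy\bullet(\calC_w\cdot\tilde\calC_u)
&=\mu_w^{-1}\mu_{u^{-1}w_0}^{-1}Y^\hy_\Pi\bullet\bigl(\pt_e\cdot(\psi(\gamma_w^-)\psi(\gamma^+_{u^{-1}w_0})\bullet\pt_{w_0})\bigr)\\
&=\de_{w,u}\mu_{w_0}^{-1}\,\psi(a_{w_0})\,w_0(x^\hy_\Pi)\,\unit .
\end{align*}
The remaining task, which I expect to be the main computational obstacle, is the identity $\psi(a_{w_0})\,w_0(x^\hy_\Pi)=\mu^{\ell(w_0)}$. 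It factors over positive roots, so it suffices to check that $\psi\bigl(\tfrac{t-t^{-1}e^{-\al}}{1-e^{-\al}}\bigr)\cdot x^\hy_\al=\mu$. This follows from $\psi(\gamma^-_\al)=\mu X_\al$ in \eqref{eq:hyXY} by comparing coefficients of $\de_\al$: on the left the coefficient is $\psi\bigl(\tfrac{t-t^{-1}e^{-\al}}{1-e^{-\al}}\bigr)$, on the right it is $\mu/x^\hy_\al$. I also need to track signs and the convention $x_{-\al}$ versus $x_\al$ in $x_\Pi$, which should match exactly as in \eqref{eq:w0act}. Once this is done, part (1) is proved.

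For part (2), I use that $\{\psi(\gamma^-_u)\}$ is a $Q^\hy$-basis of $Q^\hy_W$ with dual basis $\psi(\gamma_u^-)^*$. First I show that $\tilde\calC_u$ equals $\mu^{-1}_{w_0}\mu_u\,\psi(\gamma_u^-)^*$ up to the normalization; the precise scalar is to be determined. To do so, evaluate $\tilde\calC_u$ on $\psi(\gamma^-_v)$, i.e. compute the coefficients of $\tilde\calC_u$ in the dual basis. Concretely, I will use \eqref{eq:inv} to write $Y^\hy_\Pi\bullet(z\odot\pt_e\cdot f)$, for $z\in Q_W^\hy$, as a $Q^\hy$-linear pairing $\langle z,f\rangle\unit$, via $z\odot\pt_e=\iota(z')\bullet\pt_e$ and the identity $Y_\Pi^\hy(\pt_e\cdot g)=g(e)x_\Pi\cdot(\ldots)$. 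The pairing $z\mapsto Y^\hy_\Pi\bullet((z\odot\pt_e)\cdot f)$ is $Q^\hy$-linear in $z$ for the appropriate module structure. Part (1) then says that $f=\tilde\calC_u$ pairs with $\psi(\gamma_w^-)$ to give $\mu_w\de_{w,u}$.

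Hence, by linearity, the functional $z\mapsto Y^\hy_\Pi\bullet((z\odot\pt_e)\cdot f)$ is determined by its values on the basis. Comparing the two functionals on the basis $\psi(\gamma^-_v)$ gives the stated formula $\de_{w,u}\mu_w^{-1}$, once one verifies that $Y^\hy_\Pi\bullet((\psi(\gamma^-_w)\odot\pt_e)\cdot f_v)$-type pairings identify the dual basis correctly. The delicate point is this $Q^\hy$-linearity: the $\odot$-action is not $Q$-linear. I would handle it by rewriting everything through $\iota$ into the $\bullet$-action, where linearity holds.
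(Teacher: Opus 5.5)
\textbf{Part (1).} This is essentially the paper's proof. You push Theorem \ref{thm:Kdual}(1) through the ring map $\psi$, use complementarity in the form $(\gamma^-_w\gamma^+_{u^{-1}w_0})|_{w_0}=\de_{w,u}a_{w_0}$, and finish with $\psi(a_{w_0})\,w_0(x^\hy_\Pi)=\mu_{w_0}$. You are right that this form, and not the $\gamma^+\gamma^-$ form literally stated in Lemma \ref{lem:KLcom}, is what is needed. The same proof does adapt, using bar-invariance of $\gamma^+$ and the inversion formula at $t^2$.

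One small gap in (1): comparing $\de_\al$-coefficients in $\psi(\gamma^-_\al)=\mu X_\al$ gives $\psi\bigl(\frac{t-t^{-1}e^{-\al}}{1-e^{-\al}}\bigr)=\mu/x_\al$ only for simple $\al$. You need it for every $\al>0$. It follows from $W$-equivariance of $\psi$, or from the one-variable identity $\frac{t-t^{-1}+t^{-1}g(x)}{g(x)}=\frac{\mu}{x}$, which is how the paper checks it.

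\textbf{Part (2) has a genuine gap.} Everything is deferred to ``once one verifies that \ldots\ pairings identify the dual basis correctly'', but that verification is the whole content of (2). The detour through (1) and $\tilde\calC_u$ cannot replace it. Part (1) only says that $\{\mu_w^{-1}\psi(\gamma^-_w)\}$ and $\{\tilde\calC_u\}$ are dual for the pairing $B(z,f):=Y^\hy_\Pi\bullet\bigl((z\odot\pt_e)\cdot f\bigr)$. The elements $\psi(\gamma^-_u)^*$, by contrast, are defined through the canonical pairing $\langle \de^\hy_v,f_{v'}\rangle=\de_{v,v'}$. Passing from one to the other requires exactly the identity $B(z,f)=\langle z,f\rangle\unit$, which you leave open. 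Your tentative normalization is also off: combining (1) and (2) gives $\tilde\calC_u=\mu_u\psi(\gamma^-_u)^*$ (Corollary \ref{cor:dual}), not $\mu_{w_0}^{-1}\mu_u\psi(\gamma^-_u)^*$.

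The missing identity is a direct computation from \eqref{eq:action} and needs no $\iota$. Take $z=\sum_v a_v\de^\hy_v$ and $f=\sum_v c_vf_v$. Then $z\odot\pt_e=\sum_v a_v\,v(x^\hy_\Pi)f_v$, so $(z\odot\pt_e)\cdot f=\sum_v a_vc_v\,v(x^\hy_\Pi)f_v$. Moreover $Y^\hy_\Pi\bullet\bigl(c\,v(x^\hy_\Pi)f_v\bigr)=\sum_y c\,f_{vy^{-1}}=c\,\unit$. Hence $B(z,f)=\bigl(\sum_v a_vc_v\bigr)\unit$. Taking $z=\psi(\gamma^-_w)$ and $f=\psi(\gamma^-_u)^*$ gives (2) immediately, with no input from (1). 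This is the paper's proof, phrased there through the mutually inverse transition matrices $(a_{w,v})$ and $(b_{w,v})$.

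The ``delicate'' $Q^\hy$-linearity you worry about is not an issue. Since $(qz)\odot\pt_e=q\,(z\odot\pt_e)$, the pairing $B$ is left $Q^\hy$-linear in $z$; the $\odot$-action fails to be $Q^\hy$-linear only in its second argument.
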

\begin{proof}
(1). It is easy to verify that  for any $\al>0$, 
\[
\psi(\frac{t-t^{-1}e^{-\al}}{1-e^{-\al}})=\psi(\frac{t-t^{-1}+t^{-1}x_\al}{x_\al})=\frac{t-t^{-1}+t^{-1}g(x_\al)}{g(x_\al)}=\frac{\mu}{x_\al}, 
\]
so 
\begin{equation}\label{eq:aw0}
\psi(a_{w_0})=\frac{\mu_{w_0}}{w_0x_\Pi^\hy}. 
\end{equation}
Therefore, we have
\begin{align*}
&Y_\Pi^\hy  \bullet\left(\mu_w^{-1}\psi(\gamma_w^-)\odot\pt_e\cdot \mu_u\mu^{-1}_{w_0}\psi(\gamma^+_{u^{-1}w_0})\bullet \pt_{w_0}\right)\\
&\overset{\eqref{eq:inv}}=\mu^{-1} _w\mu_u\mu^{-1}_{w_0}Y_\Pi^\hy  \bullet\left ((\iota(\psi(\gamma^-_{w}))\bullet\pt_e)\cdot  \psi(\gamma^+_{u^{-1}w_0})\bullet \pt_{w_0}\right)\\
&\overset{\eqref{eq:inv}}=\mu_w^{-1} \mu_u\mu^{-1}_{w_0}Y_\Pi^\hy  \bullet \left(\pt_e\cdot\psi (\gamma_w^-\bullet  \gamma^+_{u^{-1}w_0})\bullet \pt_{w_0}\right)\\
&=\mu_w^{-1} \mu_u\mu^{-1}_{w_0}Y_\Pi^\hy  \bullet \left(\pt_e\cdot \psi(\gamma_w^-\gamma^+_{u^{-1}w_0})\bullet\pt_{w_0}\right)\\
&\overset{\eqref{eq:Kprod}}=\mu_w^{-1} \mu_u\mu^{-1}_{w_0}Y_\Pi^\hy  \bullet \left(\pt_e\cdot \psi(\gamma_w^-\gamma^+_{u^{-1}w_0})|_{w_0}\de^\hy_{w_0}\bullet\pt_{w_0}\right)\\
&\overset{\text{Lem. }\ref{lem:KLcom}}=\de_{w,u}\mu_w^{-1} \mu_u\mu^{-1}_{w_0}Y^\hy  _\Pi\bullet \left(\pt_e\cdot \psi(a_{w_0})\de^\hy_{w_0}\bullet \pt_{w_0}\right)\\
&\overset{\eqref{eq:aw0}}=\de_{w,u}\mu_{w_0}^{-1}Y^\hy_\Pi\bullet\left(x_\Pi^\hy f_e\cdot \frac{\mu_{w_0}}{w_0x_\Pi^\hy}\de_{w_0}^\hy\bullet (w_0(x_\Pi^\hy)f_{w_0})  \right)\\
&\overset{\eqref{eq:action}}=\de_{w,u}\sum_{v\in W}\de_v^\hy \frac{1}{x_\Pi^\hy}\bullet(x_\Pi^\hy f_e\cdot f_e)\\
&=\de_{w,u}\unit. 
  \end{align*}

(2). Write 
\[
\psi(\gamma_w^-)=\sum_{v\in W}a_{w,v}\de_v^\hy, \quad \de_w^\hy=\sum_{v\in W}b_{w,v}\psi(\gamma_{v}^-),
\]
then the two matrices $(a_{w,v})_{w,v\in W}$ and $(b_{w,v})_{w,v\in W}$ are inverse to each other. Moreover, 
\begin{align*}
\psi(\gamma_u^-)^*&=\sum_{v_1}b_{v_1,u}f_{v_1}, \\
\calC_w&=\mu_w^{-1}\psi(\gamma_w^-)\odot\pt_e=\mu_w^{-1}\sum_{v}a_{w,v}\de_v^\hy\odot {x_\Pi^\hy}f_e\overset{\eqref{eq:action}}=\mu_w^{-1}\sum_va_{w,v}{vx_\Pi^\hy}f_v. 
\end{align*}
We have
\begin{align*}
Y^\hy_\Pi\bullet \left(\calC_w\cdot \psi(\gamma_u^-)^*\right)&=Y_\Pi^\hy\bullet \left(\mu_w^{-1}(\sum_{v}a_{w,v}{vx_\Pi^\hy}f_v)\cdot (\sum_{v_1}b_{v_1,u}f_{v_1})\right)\\
&\overset{\eqref{eq:Kprod}}=\mu_w^{-1}\sum_{v_2\in W}\de_{v_2}^{\hy}\frac{1}{x_\Pi^\hy}\bullet  \left(\sum_{v}a_{w,v}b_{v,u}{vx_\Pi^\hy}f_v\right)\\
&\overset{\eqref{eq:action}}=\mu_w^{-1}\sum_{v_2\in W}(\sum_{v}a_{w,v}b_{v,u})f_{vv_2}\\
&=\mu_w^{-1}\de_{w,u}\unit. 
\end{align*}
\end{proof}

\begin{cor}\label{cor:dual} We have the following equality of classes:\[
\mu_w\psi(\gamma_w^-)^* =\tilde \calC_w
\]
\end{cor}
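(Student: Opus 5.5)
The plan is to deduce the corollary from Theorem \ref{thm:hyper}, using the non-degeneracy of the pairing
\[
\langle f,g\rangle:=Y^\hy_\Pi\bullet(f\cdot g)\in Q^\hy\unit
\]
on $(Q^\hy_W)^*$. Since $Y^\hy_\Pi\bullet(f_v\cdot f_{v'})=\de_{v,v'}\frac{1}{v(x^\hy_\Pi)}\unit$ (by \eqref{eq:Kprod} and \eqref{eq:action}, exactly as in the last computation of the proof of Theorem \ref{thm:hyper}(2)), the pairing is $Q^\hy$-bilinear and, in the basis $f_v$, is given by an invertible diagonal matrix. Hence it is non-degenerate.

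The first step is to check that $\{\calC_w\}_{w\in W}$ is a $Q^\hy$-basis of $(Q^\hy_W)^*$. From the computation in the proof of Theorem \ref{thm:hyper}(2),
\[
\calC_w=\mu_w^{-1}\sum_v a_{w,v}\,v(x^\hy_\Pi)f_v .
\]
Here $(a_{w,v})$ is invertible with inverse $(b_{w,v})$, the factors $v(x^\hy_\Pi)$ are nonzero in $Q^\hy$, and $\mu_w$ is a unit in $R$. So the transition matrix from $\{f_v\}$ to $\{\calC_w\}$ is invertible, and $\{\calC_w\}$ is a basis.

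The second step is to compare the two duality statements. Part (1) of Theorem \ref{thm:hyper} gives $\langle \calC_w,\tilde\calC_u\rangle=\de_{w,u}\unit$. Part (2) gives $\langle \calC_w,\psi(\gamma_u^-)^*\rangle=\de_{w,u}\mu_w^{-1}\unit=\de_{w,u}\mu_u^{-1}\unit$, so by bilinearity $\langle \calC_w,\mu_u\psi(\gamma_u^-)^*\rangle=\de_{w,u}\unit$. Therefore, for each fixed $u$, the difference $\tilde\calC_u-\mu_u\psi(\gamma_u^-)^*$ pairs to zero with every $\calC_w$. Since the $\calC_w$ span and the pairing is non-degenerate, the difference vanishes, and renaming $u$ as $w$ gives the corollary.

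The only point needing care is the non-degeneracy and bilinearity of the pairing. The $\bullet$-action is $Q^\hy$-linear, so scalar multiples pull out of $Y^\hy_\Pi\bullet(\cdot)$. The explicit diagonal formula then settles non-degeneracy, so I expect no genuine obstacle.
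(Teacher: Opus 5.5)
Your proposal is correct and is essentially the paper's argument: the paper states this corollary without proof right after Theorem \ref{thm:hyper}, intending it to follow from parts (1) and (2) by uniqueness of the dual basis with respect to the pairing $Y^\hy_\Pi\bullet(f\cdot g)$. Your checks that $\{\calC_w\}$ is a $Q^\hy$-basis and that the pairing is non-degenerate (its Gram matrix in the basis $f_v$ is diagonal with entries $1/v(x^\hy_\Pi)$) make explicit what the paper leaves implicit.
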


\begin{rem}Note that in the present paper we work with classes in $(Q^\hy_W)^*\cong Q^\hy\otimes_{S^\hy}\hy_T(G/B)$, that is, the localized classes, but all the classes we consider in this paper are actual classes without localization. The same comment applies to the $K$-theory classes in the previous sections. 
\end{rem}
\subsection{KL-Schubert classes for  $G/P_J$}
From now on, we assume $G/P_J$ is a Grassmannian.  Recall from \cite[Theorem 5.4]{LZZ20} \[\psi(\gamma^+_{w_J})=\mu_{w_J}Y_J ^\hy.  \]

\begin{dfn}For any $u, w\in W^J$, define the following KL-Schubert class $\calC_w^J$ and opposite KL-Schubert class $\tilde \calC_u^J$ for $\hh_T(G/P_J)$:
\begin{align*}
\calC_w^J&:=\mu_{w}^{-1}Y_J^\hy\bullet \psi(\gamma^{-}_w)\odot\pt_e=\mu_w^{-1}\psi(\gamma_w^-)\odot Y_J^\hy\bullet \pt_e, \\
 \tilde \calC^J_u&:=\tilde\calC_u=\mu_{u^{-1}w_0}^{-1}\psi(\gamma^+_{u^{-1}w_0})\bullet\pt_{w_0}.
\end{align*}
\end{dfn}
By definition, since $Y_J^\hy\bullet (Q^\hy_W)^*\subset ((Q^\hy_W)^*)^{W_J}$, so  $\calC^J_w$ is $W_J$-invariant. Corresponding fact for the opposite KL-Schubert classes is proved below.

\begin{lem}\label{lem:WJ} \begin{enumerate}
\item For any $w\in W^J$, let $I_w$ be a reduced sequence of $w$, then we have \[\calC_w^J= Y_J^\hy \bullet (X_{I_w}\odot\pt_e).\]
That is, $\calC_w^J$ coincides with the Bott-Samelson classes. 
\item Let $u\in W^J$,   the class $ \tilde \calC^J_u$ is $W^J$-invariant. 

\end{enumerate}
\end{lem}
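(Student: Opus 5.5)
For (1), I will transport the identity from the Hecke algebra to $\bfD_\hy$ via $\psi$. Corollary \ref{cor:p} gives $\gamma^-_w\gamma^+_{w_J}=\hat\gamma^-_{I_w}\gamma^+_{w_J}$ in $\bfH$ for $w\in W^J$. Since $\psi:\bfH\to\bfD_\hy$ is a ring homomorphism, I apply $\psi$ to both sides. Then I use $\psi(\hat\gamma^-_{I_w})=\mu_wX_{I_w}$ from \eqref{eq:psigamma} and $\psi(\gamma^+_{w_J})=\mu_{w_J}Y_J^\hy$. This yields
\[
\psi(\gamma_w^-)\,Y_J^\hy=\mu_w X_{I_w}Y_J^\hy
\]
in $Q_W^\hy$, after cancelling the invertible scalar $\mu_{w_J}$.

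Next I let both sides act on $\pt_e$ through the $\odot$-action. By \eqref{eq:YJcom}, transported to the hyperbolic setting via the remark that the formal properties hold for any formal group law, we have $Y_J^\hy\odot\pt_e=Y_J^\hy\bullet\pt_e$; this uses $\iota(Y_J^\hy)=Y_J^\hy$. The $\bullet$- and $\odot$-actions commute, so
\[
\mu_w^{-1}\psi(\gamma_w^-)\odot Y_J^\hy\bullet\pt_e = X_{I_w}\odot (Y_J^\hy\bullet\pt_e)=Y_J^\hy\bullet(X_{I_w}\odot\pt_e).
\]
The left-hand side is $\calC_w^J$ by definition. The only point needing care is the order of the product: the $\odot$-action is a left action, so $(zz')\odot f=z\odot(z'\odot f)$, and $\psi(\gamma_w^-)Y_J^\hy\odot\pt_e$ is indeed $\psi(\gamma_w^-)\odot(Y_J^\hy\odot\pt_e)$.

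For (2), I read "$W^J$-invariant" as $W_J$-invariant, meaning $\de_v^\hy\bullet\tilde\calC_u=\tilde\calC_u$ for $v\in W_J$. As in the proof of Theorem \ref{thm:Kdual}(2), for $u\in W^J$ we have $\ell(u^{-1}w_0)=\ell(w_J)+\ell(w_Ju^{-1}w_0)$. Hence Lemma \ref{lem:KLfactor}(2) gives $\gamma^+_{u^{-1}w_0}=\gamma^+_{w_J}z$. Here $z$ should lie in $\bfH$, since the factorization comes from left multiplication in the KL basis via the cited result of \cite{LZZ20}. Applying $\psi$ gives $\psi(\gamma^+_{u^{-1}w_0})=\mu_{w_J}Y_J^\hy\psi(z)$. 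Therefore
\[
\tilde\calC_u\in Y_J^\hy\bullet (Q_W^\hy)^*\subset ((Q_W^\hy)^*)^{W_J},
\]
using the standard fact $\de_v^\hy Y_J^\hy=Y_J^\hy$ for $v\in W_J$.

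The main obstacle is ensuring that $z$ can be taken in $\bfH$ rather than merely in $Q_W^\mul$, since $\psi$ is only multiplicative on $\bfH$. If the lemma only gives $z\in Q_W^\mul$, I will instead argue as follows. Since $\gamma^+_{w_J}$ is invertible nowhere but $Q_W^\mul$ is a twisted group algebra, I extend $\psi$ to all of $Q_W^\mul$, which is a ring map because $\psi$ intertwines the $W$-actions on $Q^\mul$ and $Q^\hy$. This extension makes the factorization transfer directly.
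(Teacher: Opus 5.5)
Your proposal is correct and follows the paper's proof essentially step for step. In (1) you apply $\psi$ to Corollary \ref{cor:p}, use \eqref{eq:psigamma} and $\psi(\gamma^+_{w_J})=\mu_{w_J}Y_J^\hy$, then \eqref{eq:YJcom} and the commutation of the two actions; in (2) you factor $\gamma^+_{u^{-1}w_0}=\gamma^+_{w_J}z$ via Lemma \ref{lem:KLfactor}, as the paper does, correctly reading ``$W^J$-invariant'' as $W_J$-invariant, and your fallback settles the worry about $z\in Q_W^\mul$: $\psi$ is $W$-equivariant on $Q^\mul$, hence a ring homomorphism on all of $Q_W^\mul$, which the paper uses implicitly.
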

\begin{proof} (1). 
We have 
\[
\psi(\gamma_w^-)Y_J^\hy =\mu_{w_J}^{-1}\psi(\gamma^-_w\gamma_{w_J}^+)\overset{\text{Cor. }\ref{cor:p}}=\mu_{w_J}^{-1}\psi(\hat\gamma^-_{I_w}\gamma_{w_J}^+)=\mu_{w_J}^{-1}\psi(\hat\gamma^-_{I_w})\psi(\gamma^+_{w_J})\overset{\eqref{eq:psigamma}}=\mu_w X_{I_w}Y_J^\hy . 
\]
So 
\[\calC_w^J=\mu_w^{-1 } Y_J^\hy \bullet \psi(\gamma^{-}_w)\odot\pt_e\overset{\eqref{eq:YJcom}}=\mu_w^{-1}\psi(\gamma_w^{-})Y_J^\hy \odot\pt_e\overset{\eqref{eq:psigamma}}=X_{I_w}Y_J^\hy \odot\pt_e=Y_J^\hy \bullet (X_{I_w}\odot\pt_e). \]
Note that in this part we used the property that the two actions $\odot$ and $\bullet$ commute with each other. 

(2). The proof is similar as that of Theorem \ref{thm:Kdual}.(2). From \[\ell(u^{-1}w_0)=\ell(w_J)+\ell(w_Ju^{-1}w_0),\] we see that  Lemma \ref{lem:KLfactor} implies that one can write $\gamma^+_{u^{-1}w_0}=\gamma^+_{w_J}z$ for some $z\in Q_W^\mul$. So \[\tilde \calC^J_u=\mu^{-1} _{u^{-1 w_0}} \psi(\gamma_{w_J}^+)\psi(z)\bullet \pt_{w_0}=\mu_{u^{-1}w_0}^{-1}\mu_{w_J}Y_J^\hy \bullet (\psi(z)\bullet \pt_{w_0})\in ((Q^\hy_W)^*)^{W_J}, \] that is, it is $W_J$-invariant.
\end{proof}
The following duality for Grassmannians can be proved similarly as Theorem \ref{thm:Kdual}.(2). 

\begin{thm}\label{thm:hyperGP}
The classes $\calC_w^J, w\in W^J$ are dual to the classes 
$
\tilde  \calC_u^J, u\in W^J$ via the map 
\[Y_{\Pi/J}^\hy \bullet\_: ((Q_W^\hy)^*)^{W_J}\to ((Q_W^\hy)^*)^W\cong Q^\hy\unit. \]
\end{thm}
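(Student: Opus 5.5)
The plan is to mimic the proof of Theorem \ref{thm:Kdual}.(2), reducing the parabolic duality to the full flag duality of Theorem \ref{thm:hyper}.(1). For $w,u\in W^J$, the goal is to show
\[
Y_{\Pi/J}^\hy\bullet\left(\calC_w^J\cdot\tilde\calC_u^J\right)=\de_{w,u}\unit .
\]

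First, $\tilde\calC^J_u$ is $W_J$-invariant. By the proof of Lemma \ref{lem:WJ}.(2) it even lies in $Y_J^\hy\bullet(Q_W^\hy)^*$. Since the hyperbolic setting inherits all formal properties of products, the two actions and the push-forwards (see the remark after \eqref{eq:YJcom}), the projection formula \eqref{eq:projection} holds for $Y_J^\hy$. Applying it with $f=\tilde\calC_u^J$ gives
\[
Y_J^\hy\bullet\left(\calC_w\cdot\tilde\calC_u\right)=\left(Y_J^\hy\bullet\calC_w\right)\cdot\tilde\calC_u=\calC_w^J\cdot\tilde\calC^J_u .
\]
Here I use that $\calC_w^J=Y_J^\hy\bullet\calC_w$, which holds by definition because $\bullet$ is $Q^\hy$-linear and commutes with $\odot$. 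I also use $\tilde\calC^J_u=\tilde\calC_u$.

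Second, apply $Y^\hy_{\Pi/J}\bullet$ and use the hyperbolic analogue of \eqref{eq:comp}, namely $Y_{\Pi/J}^\hy Y_J^\hy=Y_\Pi^\hy$. This follows from \cite[Lemma 5.7]{CZZ19}, which is stated for arbitrary formal group laws. We then get
\[
Y^\hy_{\Pi/J}\bullet\left(\calC^J_w\cdot\tilde\calC^J_u\right)=Y^\hy_\Pi\bullet\left(\calC_w\cdot\tilde\calC_u\right)=\de_{w,u}\unit
\]
by Theorem \ref{thm:hyper}.(1). Note that $Y_{\Pi/J}^\hy$ acting on $W_J$-invariant elements is independent of the choice of coset representatives, so the statement is well posed.

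The main point requiring care is the first step: verifying that the hyperbolic projection formula and composition identity transfer verbatim from the multiplicative case, and that the normalizing constants match. The class $\tilde\calC^J_u$ carries the factor $\mu_{u^{-1}w_0}^{-1}$, while $\calC_w^J$ carries $\mu_w^{-1}$. These are exactly the normalizations of Theorem \ref{thm:hyper}.(1), so no extra power of $\mu$ appears, in contrast with the $K$-theory factor $\prod_{\al>0}(t-t^{-1}e^{-\al})$, which is absorbed there by \eqref{eq:aw0}.
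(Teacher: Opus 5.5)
Your proposal is correct and is exactly the argument the paper intends. The paper proves the result ``similarly as Theorem~\ref{thm:Kdual}.(2)'', that is, by combining three ingredients: $\tilde\calC^J_u\in Y_J^\hy\bullet(Q_W^\hy)^*$ is $W_J$-invariant, so the projection formula for $Y^\hy_J$ applies; the identity $Y^\hy_{\Pi/J}Y^\hy_J=Y^\hy_\Pi$ holds; and Theorem~\ref{thm:hyper}.(1) finishes the computation. Your observation that the normalizations $\mu_w^{-1}$ and $\mu_{u^{-1}w_0}^{-1}$ already make the pairing equal to $\de_{w,u}\unit$, with no leftover factor, is also right.
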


\section{The Billey-type formula} 
In this section we study the restriction of the opposite KL-Schubert classes for Grassmannians, and deduce the Billey-type formula. This was the original motivation of this paper.

\subsection{Restrictions of the opposite KL-Schubert classes} 
Recall that since both $\{\gamma^-_u, u\in W\}$ and $\{\hat\gamma^-_u, u\in W\}$ are bases of $Q_W^\mul$, we can write
\begin{equation}\label{eq:de}
\de_w^\mul=\sum_{u\le w}b^\mul_{w,u}\gamma_{u}^{-}=\sum_{u\le w}\hat b^\mul_{w,I_u}\hat \gamma^-_{I_u}\in Q_W^\mul. 
\end{equation}
For similar reason,  in hyperbolic cohomology, we can write
\begin{equation}\label{eq:hyb}
\de^\hy_w=\sum_{u\in W}b^\hy_{w,u}\psi(\gamma_u^-)=\sum_{u\in W}\hat b_{w,I_u}^\hy\psi(\hat\gamma_{I_u}^-)\in Q_W^\hy, ~w,u\in W. 
\end{equation}
Applying $\psi$ to \eqref{eq:de} obtains
\[\de_w^\hy=\psi(\de_w^\mul)=\sum_{u\in W}\psi(b^\mul_{w,u})\psi(\gamma_{u}^{-})=\sum_{u\in W }\psi(\hat b^\mul_{w,I_u})\psi(\hat \gamma^-_{I_u}).\]
Therefore, 
\begin{equation}\label{eq:compb}b^\hy_{w,u}=\psi(b^\mul_{w,u}), \quad \hat b^\hy_{w,I_u}=\psi(\hat b^\mul_{w,I_u}).\end{equation}

Note that by Corollary \ref{cor:dual}, we have
\begin{equation}\label{eq:c}
\tilde \calC_u^J=\mu_u(\psi(\gamma_u^-))^*=\mu_u\sum_{w\ge u}b^\hy_{w,u}f_w, ~u\in W^J.
\end{equation}
\begin{lem}  We have  \[b^\hy_{wv,u}=b^\hy_{w,u}, \forall u\in W^J, w\in W, v\in W_J. \]
\end{lem}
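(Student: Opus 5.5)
The plan is to reduce to the multiplicative case and then argue as in the proof of Lemma \ref{lem:b}. By \eqref{eq:compb} we have $b^\hy_{w,u}=\psi(b^\mul_{w,u})$ for all $w,u$. It therefore suffices to prove
\[
b^\mul_{wv,u}=b^\mul_{w,u},\qquad u\in W^J,\ w\in W,\ v\in W_J,
\]
and then apply $\psi$.

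\emph{Step 1: $\de^\mul_v\gamma^+_{w_J}=\gamma^+_{w_J}$ for $v\in W_J$.} By \cite[Proposition 4.10]{LZZ20}, quoted in the proof of Theorem \ref{thm:Kdual}.(2), $\gamma^+_{w_J}=Y^\mul_J\, c$ for an explicit $c\in Q^\mul$. Since
\[
Y^\mul_J=\sum_{x\in W_J}\de^\mul_x\frac{1}{x^\mul_J}
\]
has its coefficients written on the right, we get
\[
\de^\mul_vY^\mul_J=\sum_{x\in W_J}\de^\mul_{vx}\frac{1}{x^\mul_J}=Y^\mul_J .
\]
Hence $\de^\mul_v\gamma^+_{w_J}=\gamma^+_{w_J}$, and so $\de^\mul_{wv}\gamma^+_{w_J}=\de^\mul_w\gamma^+_{w_J}$.

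\emph{Step 2: compare the expansions.} Expand $\de^\mul_{wv}$ and $\de^\mul_w$ as in \eqref{eq:de}, with coefficients in $Q^\mul$ on the left, and right-multiply by $\gamma^+_{w_J}$. As in the proof of Lemma \ref{lem:b}, every $\gamma^-_x$ with $x\notin W^J$ factors as $z\gamma^-_\al$ with $\al\in J$ (Lemma \ref{lem:KLfactor}). Since $\gamma^-_\al\gamma^+_{w_J}=0$, such terms are killed. Step 1 then yields
\[
\sum_{u\in W^J}b^\mul_{wv,u}\,\gamma^-_u\gamma^+_{w_J}=\sum_{u\in W^J}b^\mul_{w,u}\,\gamma^-_u\gamma^+_{w_J}.
\]

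\emph{Step 3: left $Q^\mul$-linear independence of $\gamma^-_u\gamma^+_{w_J}$, $u\in W^J$.} This is the only point needing care; in Lemma \ref{lem:b} it was implicitly used. The argument is a rank count.

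First, the left $Q^\mul$-module $Q^\mul_W\gamma^+_{w_J}=Q^\mul_WY^\mul_J$ has rank at least $|W^J|$. Indeed, the elements $\de^\mul_uY^\mul_J$, $u\in W^J$, are supported on the pairwise disjoint cosets $uW_J$, so they are independent.

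Second, $Q^\mul_W$ is spanned over $Q^\mul$ (on the left) by the $\gamma^-_x$, $x\in W$. Hence, by Step 2's vanishing, $Q^\mul_W\gamma^+_{w_J}$ is spanned by the $|W^J|$ elements $\gamma^-_u\gamma^+_{w_J}$.

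A spanning set of size $|W^J|$ in a module of rank at least $|W^J|$ over the field $Q^\mul$ must be a basis. Comparing coefficients in Step 2 then gives $b^\mul_{wv,u}=b^\mul_{w,u}$, and applying $\psi$ finishes the proof.
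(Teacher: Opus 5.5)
Your proof is correct, but it takes a genuinely different route from the paper. The paper argues on the dual side. By Corollary~\ref{cor:dual}, $\tilde\calC^J_u=\mu_u\sum_w b^\hy_{w,u}f_w$, and by Lemma~\ref{lem:WJ}(2) this class is $W_J$-invariant, because $\gamma^+_{u^{-1}w_0}=\gamma^+_{w_J}z$ and $\psi(\gamma^+_{w_J})=\mu_{w_J}Y^\hy_J$. An element of $(Q^\hy_W)^*$ is $W_J$-invariant exactly when its $f$-coefficients are constant on cosets $wW_J$, so the identity follows immediately.

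You instead stay in $Q^\mul_W$. You apply the absorption $\de^\mul_v\gamma^+_{w_J}=\gamma^+_{w_J}$ directly to the expansion of $\de^\mul_w$, which is the algebraic counterpart of the same $W_J$-invariance, and then transport the result by $\psi$. The paper's proof is two lines but rests on the full duality package: Lemma~\ref{lem:KLcom}, Theorem~\ref{thm:hyper} and Corollary~\ref{cor:dual}. Yours is self-contained and already proves the $K$-theoretic identity $b^\mul_{wv,u}=b^\mul_{w,u}$. As a by-product it shows that $\{\gamma^-_u\gamma^+_{w_J}\}_{u\in W^J}$ is a left $Q^\mul$-basis of $Q^\mul_W\gamma^+_{w_J}$. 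That independence is exactly what the proof of Lemma~\ref{lem:b}, and hence Theorem~\ref{thm:Billey}, uses without comment.

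There is one small slip in Step 3: $Q^\mul_W\gamma^+_{w_J}$ is not equal to $Q^\mul_WY^\mul_J$. It equals $(Q^\mul_WY^\mul_J)c$, and $c$ is not $W_J$-invariant. For $J=\{\al\}$ one finds $c=t-t^{-1}e^{\al}$, and then $\gamma^+_{w_J}=Y^\mul_Jc\notin Q^\mul_WY^\mul_J$. This does not affect your rank bound. Either right multiplication by the unit $c$ is a left $Q^\mul$-linear automorphism of $Q^\mul_W$, or, more directly, the elements $\de^\mul_u\gamma^+_{w_J}$ for $u\in W^J$ are nonzero and supported on the disjoint cosets $uW_J$, since $\gamma^+_{w_J}$ is supported on $W_J$.
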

\begin{proof}
Note from \cite{CZZ19} we know that $((Q_W^\hy)^*)^{W_J}$ has a basis \[g_w:=\sum_{v\in W_J}f_{wv}, w\in W^J,\] and Lemma \ref{lem:WJ} implies that $\tilde\calC_{u}^J, u\in W^J$ belongs to $((Q_W^\hy)^*)^{W_J}$, therefore, \[b^\hy_{wv, u}=b^\hy_{w,u}, \forall u\in W^J, w\in W, v\in W_J.\]
\end{proof}


\begin{thm}\label{thm:Billey}Assume $\{I_w\}$  is J-compatible (Assumption \ref{assump:comp}).  For $w\in W, u\in W^J$, we have
\[
\tilde \calC^J_u|_w=\mu_u\hat b^\hy_{w,I_u}. 
\]
\end{thm}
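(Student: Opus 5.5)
The plan is to combine the explicit expansion \eqref{eq:c} of the opposite KL-Schubert class with the comparison of coefficients in Lemma \ref{lem:b}, transported to hyperbolic cohomology through \eqref{eq:compb}.

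First, read off the restriction directly from \eqref{eq:c}. By Corollary \ref{cor:dual}, $\tilde\calC^J_u=\mu_u\psi(\gamma_u^-)^*$, and $\psi(\gamma_u^-)^*=\sum_{w}b^\hy_{w,u}f_w$ is the dual basis element. Indeed, pairing $\delta^\hy_w=\sum_{u'}b^\hy_{w,u'}\psi(\gamma^-_{u'})$ against $\psi(\gamma_u^-)^*$ gives $b^\hy_{w,u}$, and the restriction to the fixed point $w$ is the coefficient of $f_w$. So we get
\[
\tilde\calC^J_u|_w=\mu_u b^\hy_{w,u}.
\]
It therefore suffices to prove $b^\hy_{w,u}=\hat b^\hy_{w,I_u}$ for all $w\in W$ and $u\in W^J$.

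Second, reduce to the multiplicative case. By \eqref{eq:compb},
\[
b^\hy_{w,u}=\psi(b^\mul_{w,u}),\qquad \hat b^\hy_{w,I_u}=\psi(\hat b^\mul_{w,I_u}).
\]
So it is enough to show $b^\mul_{w,u}=\hat b^\mul_{w,I_u}$ for $u\in W^J$. This is exactly Lemma \ref{lem:b}, provided the expansions \eqref{eq:de} match those of \eqref{eq:b}. They do: every $x\in W$ factors uniquely as $x=u'v$ with $u'\in W^J$ and $v\in W_J$. Under the $J$-compatibility hypothesis (Assumption \ref{assump:comp}), the sequences $I_x$ are the concatenations $I_{u'}I_v$, so the sum over $x\in W$ in \eqref{eq:de} is the sum over pairs $(u',v)$ in \eqref{eq:b}. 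Applying $\psi$ then gives the claim.

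The main obstacle, though it is mild, is making sure the coefficient extraction in Lemma \ref{lem:b} is legitimate. That argument multiplies on the right by $\gamma^+_{w_J}$ and then compares coefficients of $\gamma^-_u\gamma^+_{w_J}$ for $u\in W^J$. This requires the elements $\gamma^-_u\gamma^+_{w_J}$, $u\in W^J$, to be linearly independent over $Q^\mul$. I would justify this by noting that $\gamma^-_u\gamma^+_{w_J}$ has leading term (in the Bruhat order on $\delta^\mul$) a nonzero multiple of $\delta^\mul_{uw_J}$, and the elements $uw_J$ are distinct. Alternatively, use that $\bfH\gamma^+_{w_J}$ is free of rank $|W^J|$ (the Deodhar module). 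One must also observe that the coefficients $b^\mul$ lie in $Q^\mul$ and that $\psi$ is a ring map on $Q^\mul$, so that \eqref{eq:compb} is valid. With these points checked, the theorem follows.
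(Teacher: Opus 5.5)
Your proposal is correct and is essentially the paper's proof: you read off $\tilde\calC^J_u|_w=\mu_u b^\hy_{w,u}$ from \eqref{eq:c}, use $J$-compatibility to match \eqref{eq:de} with \eqref{eq:b}, and transport the multiplicative identity $b^\mul_{w,u}=\hat b^\mul_{w,I_u}$ of Lemma \ref{lem:b} to the hyperbolic side via \eqref{eq:compb}. Your triangularity argument (the support of $\gamma^-_u\gamma^+_{w_J}$ lies below $uw_J$, with a nonzero coefficient at $uw_J$) correctly supplies the $Q^\mul$-linear independence that the paper's proof of Lemma \ref{lem:b} uses without comment; the Deodhar-module alternative, as you state it, would only give independence over $R$, so rely on the triangularity argument.
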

\begin{proof}
Applying Lemma \ref{lem:b}, we have
\begin{equation}\label{eq:bhy}
b^\hy_{w,u}=\hat b^\hy_{w,I_u}, ~\forall w\in W, u\in W^J.
\end{equation}
the conclusion then follows from \eqref{eq:compb} and  \eqref{eq:c}. 
\end{proof}

\subsection{The Billey-type formula}
It follows from Theorem \ref{thm:Billey} that to compute the restriction $\tilde \calC^J_u|_w, u\in W^J, w\in W $, it suffices to compute $\mu_u 
\hat b^{\hy}_{w,I_u}$. 
Recall \eqref{eq:psigamma} that $\psi(\hat \gamma_{I_u}^-)=\mu_u X_{I_u}$, so to compute $\hat b_{w,I_u}^\hy$, it suffices to look at the Billey-type formula associated to the operator $X^\hy_\al$, which was studied in \cite{LZ17}. We recall the setup. 

Since $X_\al=\frac{1}{x_\al}(\de_\al^\hy-1)$, so $\de_\al^\hy=1+x_\al X_\al$. For simplicity, denote 
\[
x_i=x_{\al_i}, ~\de_i=\de_{s_i}, ~X_i=X_{\al_i}, ~X_{ij}=X_iX_j. 
\]Let $w=s_{i_1}\cdots s_{i_k}$, $\be_j=s_{i_{1}}\cdots s_{i_{j-1}}\al_{i_j}$. Define the root polynomials
\[
h_{j}(\be)=1+x_\be X_j, \quad R_{I_w}=\prod_{j}h_{i_j}(\be_j). 
\]
The essence of the Billey-type formula is to  assume that $x_\be\in S^\hy$ commute with $X_\al$. For more details, see \cite{LZ17}. 

Expand 
\[
R_{I_w}=\sum_{v\le w}\hat b'^\hy _{I_w,I_v}X_{I_v}=\sum_v\hat b'^\hy_{I_w,I_v}\mu_v^{-1}\psi(\gamma_v^-), 
\]
then it follows from \cite[Theorem 4.4]{LZ17} that 
\[
\hat b'^\hy_{I_w,I_u}=\hat b_{w,I_u}^\hy, \quad u,w\in W
\] 
Consequently,  the definition of $R_{I_w}$ does not depend on the choice of $I_w$. So we will write $\hat b'^\hy _{w,I_u}$ for $\hat b'^\hy _{I_w,I_u}$. 
Consequently,   we have  \[\tilde \calC^J_u|_w=\mu_u\hat b'^\hy_{w,I_u}, ~u\in W^J, w\in W.\]

Before presenting an example of computation, 
recall  the twisted braid relations from \cite{L16}:
\begin{align}\label{eq:braid}
\begin{cases}X_{j}X_iX_j-X_{i}X_jX_i=\mu^{-2}(X_j-X_i), & |i-j|=1;\\
X_iX_j=X_jX_i, & |i-j|>1. 
\end{cases}
\end{align}
When computing $\hat b'^\hy_{w,I_u}$, wee will need these relations, together with $X_i^2=-X_i$.

\begin{example}
	Consider the $A_4$ case with $J = \{1,2,4\}$. We fix  $J$-compatible sequences that include $s_1s_2s_1, s_2s_3s_2, s_4s_3s_4$. Let 
\[w=s_2s_1s_3s_2s_4s_3,~ u = s_2s_3\in W^J. \] Then the eight subwords in the table below will give $X_{2}X_3$, so   $\hat b'^\hy_{w,I_u}$ is the sum of the elements in the second column of this table: 
	\[
	\begin{array}{c|c}
\text{subwords} & \\
\hline
(s_2,-,s_3,-,-,-)&x_2x_{2+3}\\
(s_2,-,-,-,-,s_3)&x_2x_{1+2+3+4}\\
(-,-,-,s_2,-,s_3)&x_{1+2+3}x_{1+2+3+4}\\
(s_2,-,-,s_2,-,s_3)&-x_2x_{1+2+3}x_{1+2+3+4}\\
(s_2,-,s_3,-,-,s_3)&-x_2x_{2+3}x_{1+2+3+4}\\
(s_2,-,s_3,s_2,-,s_3)&\sharp_1:\mu^{-2}x_2x_{2+3}x_{1+2+3}x_{1+2+3+4}\\
(s_2,s_1,-,s_2,-,s_3)&\sharp_2: \mu^{-2}x_2x_{1+2}x_{1+2+3}x_{1+2+3+4}\\
(s_2,-,s_3,-,s_4,s_3)& \sharp_3:\mu^{-2}x_{2}x_{2+3}x_{2+3+4}x_{1+2+3+4}
	\end{array}
	\]

We explain the computation. We need to compute the  terms that give $X_2X_3$. The root polynomial is
\[
R_w=(1+x_{2}X_2)(1+x_{1+2}X_1)(1+x_{2+3}X_3)(1+x_{1+2+3}X_2)(1+x_{2+3+4}X_4)(1+x_{1+2+3+4}X_3).
\]
The coefficients for the first three subwords in the table above can be obtained easily, and the fourth and the fifth ones come from the relation $X_i^2=-X_i$. 

The subword $(s_2,-,s_3,s_2,-,s_3)$ gives 
\[
x_2x_{2+3}x_{1+2+3}x_{1+2+3+4}X_{2}X_3X_2X_3,
\]
and 
\[
X_{2}X_3X_2X_3\overset{\eqref{eq:braid}}=X_2(X_2X_3X_2-\mu^{-2}X_2+\mu^{-2}X_3)=-X_2X_3X_2+\mu^{-2}X_2+\mu^{-2}X_2X_3. 
\]
The last term multiplying with $x_2x_{2+3}x_{1+2+3}x_{1+2+3+4}$ gives  the term $\sharp_1$. 

Similarly,  the subword $(s_2,s_1,-,s_2,-,s_3)$ gives
\[
x_2x_{1+2}x_{1+2+3}x_{x1+2+3+4}X_2X_1X_2X_3, 
\]
and 
\begin{align*}
X_2X_1X_2X_3&\overset{\eqref{eq:braid}}=(X_1X_2X_1-\mu^{-2}X_1+\mu^{-2}X_2)X_3\\
&=X_1X_2X_1X_3-\mu^{-2}X_1X_3+\mu^{-2}X_2X_3\\
&=X_1X_2X_3X_1-\mu^{-2}X_1X_3+\mu^{-2}X_2X_3. 
\end{align*}
The last term multiplying with $x_2x_{1+2}x_{1+2+3}x_{x1+2+3+4}$ gives the term $\sharp_2$.

Consider the subword $(s_2,-,s_3,-,s_4,s_3)$, which gives 
\[
x_{2}x_{2+3}x_{2+3+4}x_{1+2+3+4}X_2X_3X_4X_3.
\]
We have 
\begin{align*}
X_2X_3X_4X_3&=X_2(X_4X_3X_4-\mu^{-2}X_4+\mu^{-2}X_3)=X_2X_4X_3X_4-\mu^{-2}X_2X_4+\mu^{-2}X_2X_3.
\end{align*}
This gives the coefficient in $\sharp_3$.
\end{example}

\end{document}